\documentclass[10pt,a4paper]{amsart}
\usepackage{amsmath,amsthm,amssymb,mathrsfs,hyperref}
\usepackage{aliascnt}
\usepackage{charter}
\DeclareMathAlphabet{\mathbsf}{OT1}{iwona}{b}{n}

\usepackage{xparse}
\usepackage{xcolor}
\definecolor{doom}{rgb}{0,0,0.68}
\hypersetup{
	unicode=true,
	colorlinks=true,
	citecolor=doom,
	linkcolor=doom,
	anchorcolor=doom
}



\makeatletter
\expandafter\g@addto@macro\csname th@plain\endcsname{%
		\thm@notefont{\bfseries}
	}%
\makeatother

\newtheorem{theorem}{Theorem}
\newtheorem*{theorem*}{Theorem}
\newtheorem*{lemma*}{Lemma}

\newaliascnt{lemma}{theorem}
\newaliascnt{prop}{theorem}
\newaliascnt{cor}{theorem}
\newaliascnt{conj}{theorem}
\newaliascnt{fact}{theorem}
\newaliascnt{example}{theorem}
\newaliascnt{defn}{theorem}

\newtheorem{lemma}[lemma]{Lemma}
\newtheorem{prop}[prop]{Proposition}

\newtheorem{fact}[fact]{Fact}

\aliascntresetthe{lemma}
\aliascntresetthe{prop}
\aliascntresetthe{cor}
\aliascntresetthe{conj}
\aliascntresetthe{fact}
\aliascntresetthe{example}

\newaliascnt{claim}{theorem}

\aliascntresetthe{claim}

\theoremstyle{definition}

\newtheorem*{definition*}{Definition}

\aliascntresetthe{defn}


\newcommand{\axiom}[1]{\mathbsf{#1}}  
\newcommand{\ZFA}{\axiom{ZFA}}

\newcommand{\ZFC}{\axiom{ZFC}}
\newcommand{\AC}{\axiom{AC}}
\newcommand{\DC}{\axiom{DC}}
\newcommand{\ZF}{\axiom{ZF}}
\newcommand{\HS}{\axiom{HS}}

\newcommand{\Ord}{\axiom{Ord}}

\newcommand{\WISC}{\axiom{WISC}}

\newcommand{\cG}{\mathscr G}
\newcommand{\cF}{\mathcal F}
\newcommand{\fM}{\mathfrak M}
\newcommand{\fN}{\mathfrak N}

\newcommand{\PP}{\mathbb P}

\newcommand{\cZ}{\mathcal Z}

\newcommand{\DD}{\mathbb D}

\DeclareMathOperator{\id}{id}
\DeclareMathOperator{\sym}{sym}
\DeclareMathOperator{\fix}{fix}
\DeclareMathOperator{\dom}{dom}
\DeclareMathOperator{\rng}{rng}
\DeclareMathOperator{\aut}{Aut}

\mathchardef\mhyphen="2D


\newcommand{\forces}{\Vdash}

\newcommand{\power}{\mathcal P}
\newcommand{\res}{\!\!\upharpoonright}

\newcommand{\midd}{\mathrel{}\middle|\mathrel{}}
\NewDocumentCommand \set{mo}{
		\IfNoValueTF{#2}
			{\left\{ #1 \right\}}
			{\left\{ #1 \midd #2 \right\}}}

\newcommand{\expr}[1]{\ulcorner #1 \urcorner}
\address{\textbf{Einstein Institute of Mathematics}\\
Edmond J. Safra Campus, Givat Ram\\
The Hebrew University of Jerusalem.\\
Jerusalem, 91904, Israel}
\title{Embedding Orders Into the Cardinals With $\DC_\kappa$}
\author{Asaf Karagila}
\email{karagila@math.huji.ac.il}
\urladdr{http://boolesrings.org/asafk}
\date{May 31, 2013.}
\subjclass[2000]{Primary 03E25; Secondary 03E35}
\keywords{axiom of choice, symmetric extensions, cardinals}

\begin{document}

\begin{abstract}
Jech proved that every partially ordered set can be embedded into the cardinals of some model of $\ZF$. We extend this result to show that every partially ordered set can be embedded into the cardinals of some model of $\ZF+\DC_{<\kappa}$ for any regular $\kappa$. We use this theorem to show that for all $\kappa$, the assumption of $\DC_\kappa$ does not entail that there are no decreasing chains of cardinals. We also show how to extend the result to and embed into the cardinals a proper class which is definable over the ground model. We use this extension to give a large cardinals-free proof of independence of the weak choice principle known as $\WISC$ from $\DC_\kappa$.
\end{abstract}
\maketitle

\section{Introduction}

Assuming the axiom of choice, cardinals trivially form a well-ordered class, but with its failure their order structure can be as complex as desired. An interesting example for this range of possibilities is Jech's theorem in which he proves that if $(P,\leq)$ is a partial order then there exists a model of $\ZFA$ (Zermelo-Fraenkel with Atoms) in which $(P,\leq)$ can be embedded into the cardinals (see \cite{Jech:ORD1966}). The theorem was complemented by the Jech-Sochor embedding theorem which allowed to carry the consistency result into $\ZF$ and remove the need for atoms (see \cite{JechSochor:1966a, JechSochor:1966b} and \cite[Chapter~6]{Jech:AC1973}). This theorem tells us, essentially, that there are no limitations on the order structure of cardinals defined by injections.

Jech's original proof included adding many counterexamples to $\DC$ (in the form of Dedekind-finite sets). While it can be modified to allow $\DC_\kappa$ to hold, the Jech-Sochor theorem is not suited to transfer universal statements such as $\DC_\kappa$. We should point out that Pincus improved upon Jech-Sochor's original work and showed that it is possible to transfer injectively boundable statements (a class of statements which include $\DC_\kappa$), and more. In this paper we give a direct forcing argument to Jech's proof, and this allows us to preserve $\DC_\kappa$ up to an arbitrary (but fixed) cardinal $\kappa$. We then proceed to show that in fact ordered classes (which are definable in the ground model) may be embedded into the cardinals while preserving $\DC_\kappa$.

The authors of \cite{MooreBanas:1990} remark that it is unknown whether or not the assumption that there are no decreasing sequences of cardinals (in the $\leq$ relation) implies that the axiom of choice holds. They write in section four: ``The answer [...] is almost certainly negative, but thus far there is no proof.'' and mention that there has been some disagreement on the topic in the past. We will use the improved embedding theorem (of partial orders into the cardinals) to show that for any $\kappa$ it is consistent with $\ZF+\DC_\kappa$ that there are decreasing chains of cardinals. While there is no positive results yet, this somehow suggests that the axiom of choice might be equivalent to the assertion ``There is no infinite decreasing sequence of cardinals''.

Decreasing sequences of cardinals also have importance in answering a question of Feldman \& Orhon which appeared in \cite{FeldmanOrhonBlass:2008}. In the paper the authors prove \footnote{The original proof is due to Tarski, see \cite[Form~T3(n), pp. 22-23]{RubinRubinHoward:1985}.} that for any $k\in\omega\setminus\{0,1\}$ the assertion that ``Every antichain of cardinals has less than $k$ members'' implies the axiom of choice, and a question asks about replacing the finite bound by $\omega$. Feldman and Orhon conjectured that ``Every antichain of cardinals is finite'' does not imply the axiom of choice in $\ZF$. The question is still open, but we will show that $\ZF+\DC_\kappa$ cannot prove that every antichain is finite.

In \cite{Roguski:1990} the author proves that it is consistent relative to the consistency of $\ZF$ that for every set of cardinals there exists one incomparable with all of them. We extend this result and show its compatibility with $\DC_\kappa$. We use this extension to show the consistency of long chains and antichains of cardinals, and to give a large cardinals-free consistency result of the failure of $\WISC$, a recent choice principle related to constructive set theory.

\subsection{Clarification}
After the acceptance and revision of the paper it was pointed out to the author that Takahashi proved in \cite{Tahakashi:1968} results in the vein of some of the results in this paper. He shows that a partial order can be embedded into subsets of the continuum, a result which is generalized in Section~3 of this paper. Takahashi infers the existence of a decreasing chain of cardinals of order $\omega^*$, as we show possible in Section~5. In this paper we extend both results to a much broader context, and the proofs presented here are written in a modern format using unramified forcing and symmetric models.

\section{Basic Definitions}
Suppose that $\fM$ is a countable transitive model of $\ZFC$, a notion of forcing $\PP=(P,\leq)\in\fM$ is a partial order with a maximum denoted by $1_\PP$. The elements of $P$ are called conditions and when $p\leq q$ we say that $p$ extends $q$, or that $p$ is stronger than $q$. We say that $p$ and $q$ are \textbf{compatible} if there is $r$ which extends both of them, otherwise $p$ and $q$ are \textbf{incompatible}. If a certain definition of $\PP$ does not result in a partial order that has a maximum, we add one artificially. We will also consider only non-trivial notions of forcing, that is to say that every $p\in P$ has two incompatible extensions.

We define by induction the class of $\PP$-names (calculated within $\fM$):
\begin{enumerate}
\item $\fM^\PP_0 = \varnothing$;
\item $\fM^\PP_{\alpha+1} = \power(P\times\fM^\PP_\alpha)$;
\item $\fM^\PP_\delta = \bigcup_{\alpha<\delta}\fM^\PP_\alpha$ for a limit ordinal $\delta$.
\end{enumerate} 
Finally the class of $\PP$-names is \[\fM^\PP = \bigcup_{\alpha\in\Ord}\fM^\PP_\alpha,\] 
where $\power(x)$ denotes the power set of $x$, and $\Ord$ denotes the class of ordinals. We will use $\dot x$ to denote a $\PP$-name, and $\check x$ to denote a canonical name for $x\in\fM$. If $G$ is a $\PP$-generic filter over $\fM$ then $\dot x^G$ is the interpretation of $\dot x$ by the filter $G$.

Let $\set{\dot x_i}[i\in I]$ be a class of $\PP$-names in $\fM$ (if it is a proper class we then require it to be definable). We denote by $\set{\dot x_i}[i\in I]^\bullet$ the name $\set{(1_\PP,\dot x_i)}[i\in I]$. We shall also use $(\dot x,\dot y)^\bullet$ to denote the canonical name for the ordered pair, namely $\set{\set{\dot x}^\bullet,\set{\dot x,\dot y}^\bullet}^\bullet$.

Suppose that $\pi$ is an automorphism of $\PP$, we may extend $\pi$ as an automorphism of $\PP$-names by induction, \[\widetilde{\pi}(\dot x)=\set{(\pi p,\widetilde\pi\dot y)}[(p,\dot y)\in\dot x].\]
From this point, though, we will only use $\pi$ to denote the automorphism of $\PP$ as well the automorphism of the $\PP$-names. If $\PP$ was defined using parameters from $A$ then a permutation of $A$ can be used to define an automorphism of $\PP$. This will be the case in our proofs. It can be shown by induction that if $x\in\fM$ then $\pi\check x=\check x$ for any $\pi\in\aut(\PP)$.

\begin{lemma*}[The Symmetry Lemma]\label{Lemma:SymmetryLemma}
Let $\varphi(u_1,\ldots, u_n)$ be a formula in the language of set theory, $p$ a condition in $\PP$ and $\dot x_1,\ldots,\dot x_n$ are $\PP$-names, and $\pi\in\aut(\PP)$. Then \[p\forces\varphi(\dot x_1,\ldots,\dot x_n)\iff\pi p\forces\varphi(\pi\dot x_1,\ldots,\pi\dot x_n)\]
\end{lemma*}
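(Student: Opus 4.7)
The plan is to reduce the Symmetry Lemma to a statement about generic filters, via three steps: showing $\pi$ preserves genericity, that it commutes appropriately with interpretation, and then invoking the forcing theorem.

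First, I would observe that for any $\PP$-generic filter $G$ over $\fM$, the image $\pi[G]=\set{\pi p}[p\in G]$ is again $\PP$-generic over $\fM$. This is immediate: $\pi$ is an order-automorphism, so it preserves the filter property (upward closure and pairwise compatibility of extensions), and it sends maximal antichains in $\fM$ to maximal antichains in $\fM$, so it meets every dense set. Moreover, $p \in G \iff \pi p \in \pi[G]$ since $\pi$ is a bijection, and the assignment $G \mapsto \pi[G]$ is itself a bijection on generic filters with inverse $H \mapsto \pi^{-1}[H]$.

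Second, I would prove by $\in$-induction on $\PP$-names that, for every generic $G$,
\[
(\pi\dot x)^{\pi[G]} = \dot x^G.
\]
Unfolding the definitions,
\[
(\pi\dot x)^{\pi[G]} = \set{(\pi\dot y)^{\pi[G]}}[(p,\dot y)\in\dot x \text{ and } \pi p\in\pi[G]].
\]
The condition $\pi p\in\pi[G]$ is equivalent to $p\in G$, and by the inductive hypothesis $(\pi\dot y)^{\pi[G]}=\dot y^G$, so the right-hand side equals $\set{\dot y^G}[(p,\dot y)\in\dot x,\ p\in G] = \dot x^G$.

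Third, I would invoke the semantic characterization of forcing. Suppose $p\forces\varphi(\dot x_1,\ldots,\dot x_n)$, and let $H$ be any generic filter containing $\pi p$. Setting $G=\pi^{-1}[H]$, we have $p\in G$, so by hypothesis $\fM[G]\models\varphi(\dot x_1^G,\ldots,\dot x_n^G)$; by step two this is $\fM[H]\models\varphi((\pi\dot x_1)^H,\ldots,(\pi\dot x_n)^H)$, using that $\fM[G]=\fM[H]$ since each is obtained from the other by applying $\pi^{\pm 1}$ to its generator. As $H$ was arbitrary, $\pi p\forces\varphi(\pi\dot x_1,\ldots,\pi\dot x_n)$. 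The reverse implication follows by applying the same argument to $\pi^{-1}$ and the condition $\pi p$.

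The only real bookkeeping obstacle is the induction in step two: one must be careful to separate the action of $\pi$ on the first coordinate (sending $p$ to $\pi p$) from its action on the second coordinate (recursive), and to observe that both interact transparently with $\pi[G]$. Once that identity is in hand, the semantic equivalence gives the lemma with no further work; alternatively, one could give a parallel proof by induction on the complexity of $\varphi$ using the definable forcing relation, but the generic-filter route is the cleanest.
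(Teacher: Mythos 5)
Your proof is correct, but it takes a genuinely different route from the one the paper relies on: the paper does not prove the lemma itself, instead citing Kunen (Lemma~7.13(c)) and noting that the proof there is a syntactic induction on the complexity of $\varphi$, carried out with the definable forcing relation. You argue semantically instead: genericity of $\pi[G]$, the interpretation identity $(\pi\dot x)^{\pi[G]}=\dot x^{G}$ (your rank induction here is right, and the equivalence $\pi p\in\pi[G]\iff p\in G$ is exactly where injectivity of $\pi$ is used), and then the truth-lemma characterization of $\forces$ quantifying over all generic filters containing $\pi p$. The trade-off is worth noting. Your route requires that every condition lie in some generic filter and that $q\forces\psi$ be equivalent to truth in all generic extensions by filters containing $q$ --- both available because the paper fixes a countable transitive $\fM$ throughout, so this is harmless here; the syntactic induction, by contrast, is a theorem of $\ZFC$ about the definable relation, needs no generics, and transfers more directly to situations like the class-length Easton product of Section~4, where the forcing theorem for the full class forcing is not automatic (the paper sidesteps this by observing that each relevant argument takes place inside some set-sized $\PP^{\leq i}$, so either proof suffices there). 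Two small points you leave implicit and should make explicit: the reverse implication uses that the extension of automorphisms to names is functorial, i.e.\ $\widetilde{\pi^{-1}}(\widetilde{\pi}\dot x)=\dot x$, which needs its own (easy) rank induction; and $\fM[G]=\fM[H]$ holds because $\pi\in\fM$, so each of $G$, $H=\pi[G]$ is definable from the other over $\fM$.
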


The proof is by induction on the complexity of $\varphi$, and can be found in \cite[Lemma~7.13(c)]{Kunen:1980}. 

Suppose that $\cG$ is a group of permutations of a set $A$, and $E\subseteq A$; we define pointwise stabilizer of $E$ as the group $\fix_\cG(E)=\set{\pi\in\cG}[\pi\res E=\id_E]$. If $\cG$ acts on $\PP$-names (through its action on $\PP$ in most cases) we define the stabilizer of the name $\dot x$ as the group $\sym_\cG(\dot x)=\set{\pi\in\cG}[\pi\dot x=\dot x]$. We omit $\cG$ from these notations if it is clear from context.

If $\cG$ is a group, we say that $\cF\subseteq\power(\cG)$ is \textbf{a filter of subgroups} if whenever $H\in\cF$ and $H\leq K$ then $K\in\cF$, and $\cF$ is closed under finite intersections. We also require the trivial group is not in $\cF$. We say that $\cF$ is \textbf{normal} if it is closed under conjugation.

If $\cG$ is a group of permutations of $\PP$ (or acting on it), and $\cF$ is a normal filter of subgroups of $\cG$ we say that $\dot x\in\fM^\PP$ is an \textbf{$\cF$-symmetric} name if $\sym(\dot x)\in\cF$. We define the class of \textbf{hereditarily $\cF$-symmetric sets} by induction, $\dot x$ is hereditarily $\cF$-symmetric if and only if $\dot x$ is $\cF$-symmetric, and for every $(p,\dot y)\in\dot x$, $\dot y$ is hereditarily $\cF$-symmetric. We shall denote by $\HS_\cF$ the class of hereditarily $\cF$-symmetric names, and as usual we will omit $\cF$ when it is clear from the context.

Let $G$ be a $\PP$-generic filter over $\fM$, and denote by $\fN$ the class $(\HS_\cF)^G=\set{\dot x^G}[\dot x\in\HS_\cF]$. $\fN$ is called a \textbf{symmetric extension} (generated by $\cF$) of $\fM$. The following theorem is stated, and its proof can be found in \cite{Jech:ST2003}.

\begin{theorem*}
$\fN$ is a transitive model of $\ZF$ and $\fM\subseteq\fN\subseteq\fM[G]$.\qed
\end{theorem*}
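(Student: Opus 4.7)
The plan is to verify, in this order, that $\fN$ is transitive, that $\fM\subseteq\fN\subseteq\fM[G]$, and finally that $\fN$ satisfies each of the $\ZF$ axioms, since the statement is standard and its full proof is referenced in Jech. I would organise the argument around the closure properties of the class $\HS_\cF$ under the natural name-building operations.

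For the three easy containments: $\fN\subseteq\fM[G]$ is immediate because $\HS_\cF\subseteq\fM^\PP$. Transitivity of $\fN$ is built into the definition of hereditary symmetry: if $\dot x\in\HS$ and $z\in\dot x^G$, then $z=\dot y^G$ for some $(p,\dot y)\in\dot x$ with $p\in G$, and $\dot y\in\HS$ by definition, so $z\in\fN$. For $\fM\subseteq\fN$ I would check by induction on rank that $\check x\in\HS$ for every $x\in\fM$: since $\pi\check x=\check x$ for all $\pi\in\cG$, we have $\sym(\check x)=\cG\in\cF$, and the names appearing in $\check x$ are again of the form $\check y$, so hereditary symmetry follows.

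For the $\ZF$ axioms, Extensionality and Foundation are inherited from $\fM[G]$ via transitivity. Pairing and Union are handled by observing that the natural name constructions keep one inside $\HS$: $\sym(\{\dot x,\dot y\}^\bullet)\supseteq\sym(\dot x)\cap\sym(\dot y)$, and similarly for $\bigcup^\bullet$, and both intersections are in $\cF$ since $\cF$ is a filter. Infinity is immediate from $\fM\subseteq\fN$ via $\check\omega$. For Power Set, given $\dot X\in\HS$, I would let $\dot Y$ be the $\bullet$-name collecting all $\dot Z\in\HS$ whose support is contained in $P\times\{\dot z:(q,\dot z)\in\dot X\text{ for some }q\}$ (a set, not a class, by a rank bound), and check that this $\dot Y$ is symmetric because $\sym(\dot X)$ acts on this collection.

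The main obstacle, and the step I would handle most carefully, is the Separation and Replacement schemes, because these require a symmetric version of the forcing theorem: for $\varphi$ a formula and $\dot x_1,\ldots,\dot x_n\in\HS$, whether $p\forces\varphi^{\HS}(\dot x_1,\ldots,\dot x_n)$ (where quantifiers are restricted to $\HS$) is itself definable in $\fM$, and the Symmetry Lemma transfers to this relativised forcing. Granting this, Separation is proved by taking, for $\dot x\in\HS$ and $\dot a_1,\ldots,\dot a_n\in\HS$, the name
\[
\dot y=\set{(p,\dot z)}[(p,\dot z)\in\dot x\text{ and }p\forces\varphi^{\HS}(\dot z,\dot a_1,\ldots,\dot a_n)],
\]
and verifying that $\sym(\dot y)\supseteq\sym(\dot x)\cap\bigcap_i\sym(\dot a_i)\in\cF$, using the Symmetry Lemma to see that the defining condition is preserved by this intersection. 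Replacement then follows by the standard reflection/collection argument inside $\fM$, combined with the same symmetry computation, to collect witnesses into a single $\HS$-name.
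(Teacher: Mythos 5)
The paper does not prove this theorem at all --- it is stated as standard with a citation to Jech's \emph{Set Theory} --- and your sketch is precisely the argument found there: transitivity and $\fM\subseteq\fN\subseteq\fM[G]$ directly from the definition of $\HS$, closure of $\HS$ under the canonical name-building operations for Pairing, Union and Power Set, and a relativised forcing relation $\forces^{\HS}$ together with the Symmetry Lemma for Separation and Replacement, so your proposal is correct and takes essentially the same approach as the cited proof. Two small points to tidy: in the Power Set step you mean the collection of all $\dot Z\in\HS$ with $\dot Z\subseteq P\times\dom(\dot X)$ (the word ``support'' has the fixed technical meaning of a set $E$ with $\fix(E)\leq\sym(\dot Z)$, so your phrasing is off), and both that step and the transfer of the Symmetry Lemma to $\forces^{\HS}$ silently use normality of $\cF$ to guarantee that each $\pi\in\cG$ maps $\HS$ onto $\HS$, which deserves explicit mention.
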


For a cardinal $\kappa$ we denote by $\DC_\kappa$ the \textbf{Principle of Dependent Choice for $\kappa$} which states that for every non-empty set $X$, if $R$ is a binary relation such that for every ordinal $\alpha<\kappa$, and every $f\colon\alpha\to X$ there is some $y\in X$ such that $f\mathrel{R} y$ then there is $f\colon\kappa\to X$ such that for every $\alpha<\kappa$, $f\res\alpha\mathrel{R}f(\alpha)$. We shall abbreviate by $\DC_{<\kappa}$ the assertion $(\forall\lambda<\kappa)\DC_\lambda$.

The axiom of choice implies that $\DC_\kappa$ holds for every $\kappa$, and in fact $\forall\kappa.\DC_\kappa$ is equivalent to the axiom of choice. One useful consequence of $\DC_\kappa$ is that for every set $X$ there is either an injection from $X$ into $\kappa$ or an injection from $\kappa$ into $X$. One can find a thorough treatment of $\DC_\kappa$ and related choice principles in \cite[Chapter~8]{Jech:AC1973}.

\begin{lemma}\label{Lemma:ClosureDC}
Let $\cF$ be a normal filter of subgroups of a group of automorphisms of $\PP$, and $\fN=(\HS_\cF)^G$ be the symmetric extension of $\fM$. If $\PP$ is $\kappa$-closed and $\cF$ is a $\kappa$-complete filter then $\fN\models\DC_{<\kappa}$.
\end{lemma}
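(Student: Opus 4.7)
The plan is to fix $\lambda<\kappa$ and an instance of $\DC_\lambda$ in $\fN$ — a nonempty set $X$ with a binary relation $R$ such that every $f\colon\alpha\to X$ with $\alpha<\lambda$ admits an $R$-successor in $X$ — and produce a witness $f\colon\lambda\to X$ inside $\fN$. Choose $\HS$-names $\dot X,\dot R$ and a condition $p_0\in G$ forcing the instance. The core idea is a transfinite recursion of length $\lambda$ carried out in $\fM$, producing a decreasing chain of conditions $(p_\alpha)_{\alpha<\lambda}$ below $p_0$ together with hereditarily symmetric names $(\dot y_\alpha)_{\alpha<\lambda}$, which are then assembled into a single $\HS$-name for $f$.

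At step $\alpha$ of the recursion, the $\kappa$-closure of $\PP$ (applicable since $\alpha<\lambda<\kappa$) supplies a condition $p'_\alpha$ below every $p_\beta$, $\beta<\alpha$. Setting $\dot f_\alpha=\set{(\check\beta,\dot y_\beta)^\bullet}[\beta<\alpha]^\bullet$, this name is hereditarily symmetric: its stabilizer contains $\bigcap_{\beta<\alpha}\sym(\dot y_\beta)$, and $\kappa$-completeness of $\cF$ places this intersection of fewer than $\kappa$ subgroups back in $\cF$, while hereditary symmetry of the $\dot y_\beta$ is the inductive hypothesis. Since $p'_\alpha\leq p_0$, it forces that $\dot f_\alpha$ has an $\dot R$-successor in $\dot X$; because any witness to this existential in a generic extension must be named by a member of $\HS$, a density argument yields $p_\alpha\leq p'_\alpha$ and $\dot y_\alpha\in\HS$ with $p_\alpha\forces\dot y_\alpha\in\dot X\wedge\dot f_\alpha\mathrel{\dot R}\dot y_\alpha$; the axiom of choice in $\fM$ lets us make these choices coherently along the recursion. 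Setting $\dot f=\set{(\check\alpha,\dot y_\alpha)^\bullet}[\alpha<\check\lambda]^\bullet$, another appeal to $\kappa$-completeness puts $\dot f$ into $\HS$, and a final use of $\kappa$-closure produces a lower bound $p$ for $(p_\alpha)_{\alpha<\lambda}$ that forces $\dot f$ to be a $\DC_\lambda$-witness.

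Since this argument runs below any condition forcing the instance, the set of conditions forcing the existence of such an $\HS$-named witness is dense below $p_0$; some such condition lies in $G$, and so $\fN$ contains the required function. As $\lambda<\kappa$ was arbitrary, $\fN\models\DC_{<\kappa}$. The delicate point is keeping the recursion inside $\HS$: the $\kappa$-completeness of $\cF$ is exactly what keeps each partial approximation $\dot f_\alpha$ (and the final $\dot f$) symmetric, while the $\kappa$-closure of $\PP$ plays the parallel forcing-theoretic role of passing limit stages and yielding a common lower bound at the end.
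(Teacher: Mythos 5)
Your proof is correct, but it is organized around a different reduction than the paper's. The paper first proves a closure fact: every function $f\colon\lambda\to\fN$ (for $\lambda<\kappa$) that lies in $\fM[G]$ already lies in $\fN$. Given this, $\DC_\lambda$ is immediate, since $\fM[G]\models\AC$ supplies the witnessing function outright; the chain of conditions and the $\kappa$-complete intersection of stabilizers are used only to show that a name $\dot f_0$ for a given outer function can be replaced by the symmetric name $\set{\dot t_\alpha}[\alpha<\lambda]^\bullet$ assembled from $\HS$-names deciding its values. You instead run the dependent-choice recursion entirely in $\fM$ via the forcing relation, choosing at each stage a condition and an $\HS$-name for a successor, and close with an explicit density argument below $p_0$. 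The two proofs use exactly the same two hypotheses in the same roles ($\kappa$-closure for the decreasing chain and the limit condition, $\kappa$-completeness of $\cF$ to keep the assembled name symmetric), so the core mechanism is shared; the trade-off is that the paper's route is shorter and yields a reusable, strictly stronger fact ($\fN$ is closed under $<\kappa$-sequences from $\fM[G]$, not merely $\DC_{<\kappa}$), whereas yours avoids appealing to choice in $\fM[G]$ and makes explicit two steps the paper glosses: the genericity/density argument ensuring the constructed condition can be found in $G$, and the fullness-type observation that any witness forced into $\dot X$ admits an $\HS$-name --- which deserves one more line in your write-up, namely that one may take the name among those $\dot y$ occurring in the hereditarily symmetric name $\dot X$ (equivalently, cite transitivity of $\fN$), after extending the condition to decide which element of $\dot X$ the witness equals. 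Two cosmetic slips do not affect correctness: the index in your final name should read $\alpha<\lambda$ rather than $\alpha<\check\lambda$, and your notation overloads $p_0$ as both the condition forcing the instance and the first term of the chain.
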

\begin{proof}
Let $\lambda<\kappa$, we will show that if $f\colon\lambda\to\fN$ is in $\fM[G]$ then $f\in\fN$. From this it follows that $\fN\models\DC_\lambda$, because if $X$ and $R$ are elements of $\fN$ as in the assumptions of $\DC_\lambda$, then we can find $f\colon\lambda\to X$ in $\fM[G]$ (as the latter is a model of $\AC$), and by the proof here we will have that $f\in\fN$.

Let $\dot f_0$ be a name for $f$ and let $p$ be a condition forcing that $\dot f_0$ is a function whose domain is $\lambda$ and its range is a subset of $\fN$. Because $\PP$ is $\kappa$-closed we can extend $p$ to $p_0\geq p_1\geq\dots\geq p_\alpha\geq\dots\geq p_\lambda$ such that for all $\alpha<\lambda$, $p_\alpha\forces\dot f_0(\check\alpha)=\dot t_\alpha$, where $\dot t_\alpha\in\HS$. Then we can define the collection $\set{\dot t_\alpha}[\alpha<\lambda]$ in $\fM$, and take $\dot f=\set{\dot t_\alpha}[\alpha<\lambda]^\bullet$. Clearly $\dot f^G=f$ whenever $p_\lambda\in G$. We need to show that $\dot f\in\HS$, but it is enough to show that $\sym(\dot f)\in\cF$ because all the names appearing in $\dot f$ are taken from $\HS$ to begin with. We have that for every $\pi\in\bigcap_{\alpha<\lambda}\sym(\dot t_\alpha)$ it holds that $\pi\dot f=\dot f$, and by $\kappa$-completeness of $\cF$ we have that the intersection is in $\cF$, and so $\dot f$ is in $\HS$ as wanted.
\end{proof}

Remember that if $A$ is a set the $|A|$ is the \textbf{cardinal number} of $A$. While in $\ZFC$ cardinal numbers are all ordinals, without the axiom of choice it is not always the case. We define $|A|$ to be the least ordinal bijectible with $A$ if such ordinal exists, and otherwise $|A|$ is the set of those $B$ which are in bijection with $A$ and have minimal rank with respect to this property. If $|A|$ is a finite ordinal we say that $A$ is finite, if it is an infinite ordinal we say that $|A|$ is an \textbf{aleph number}; in both cases we may say that $|A|$ is a well-ordered cardinal. If $|A|$ is not a well-ordered cardinal we say that $A$ is \textbf{not well-orderable}.

For $A,B$ sets we define $|A|\leq|B|$ if and only if there is an injection from $A$ into $B$, and $|A|\leq^\ast|B|$ if and only if $A$ is empty or there is a surjection from $B$ onto $A$. Both relations are reflexive and transitive, but only $\leq$ is provably antisymmetric without the axiom of choice. We also have that $|A|\leq|B|$ implies $|A|\leq^\ast|B|$. For further analysis of the $\leq^\ast$ relation see \cite{MooreBanas:1990}.

\section{Embedding Partially Ordered Sets Into Cardinals}

Let $\fM$ be a countable transitive model of $\ZFC$, $\kappa$ regular in $\fM$. Let $(\cZ,\leq)\in\fM$ be a partially ordered set. We want to embed $(\cZ,\leq)$ into the cardinals of some model, but instead we will embed $(\power(\cZ),\subseteq)$. We observe that $(\cZ,\leq)$ itself embeds into its power set by the map $z\mapsto\set{z'\in\cZ}[z'\leq z]$, and so it is indeed enough to embed the power set of $\cZ$. 

We define $\PP=(P,\leq)$ to be the following notion of forcing defined within $\fM$. $p\in P$ is a partial function $p\colon(\cZ\times\kappa)\times\kappa\to 2$ such that $|\dom p|<\kappa$. As usual $p\leq q\iff q\subseteq p$. We note that this forcing is $\kappa$-closed and therefore does not collapse cardinals smaller than $\kappa^+$. If $\kappa^{<\kappa}=\kappa$, then $\PP$ has $\kappa^+$-c.c. and no cardinals are collapsed.

If $G$ is $\PP$-generic over $\fM$ then $\bigcup G=g$ is a total function from $(\cZ\times\kappa)\times\kappa$ to $2$ in $\fM[G]$. We define the following generic sets and we give them canonical names:
\begin{itemize}
\item Let $z\in\cZ,\alpha\in\kappa$ we define $r_{z,\alpha}=\set{\gamma<\kappa}[g((z,\alpha),\gamma)=1]$, with the canonical name $\dot r_{z,\alpha}=\set{(p,\check\gamma)}[p((z,\alpha),\gamma)=1]$.
\item Let $z\in\cZ$ we define $R_z=\set{r_{z,\alpha}}[\alpha<\kappa]$ with the canonical name $\dot R_z=\set{\dot r_{z,\alpha}}[\alpha<\kappa]^\bullet$.
\item Let $Q\subseteq\cZ$ we define $D_Q=\bigcup_{z\in Q} R_z$. We do not give a canonical name to $D_Q$, because we allow $Q\notin\fM$.
\end{itemize}

Let $\cG$ the group of all permutations of $\cZ\times\kappa$ such that for all $(z,\alpha)$ we have $\pi(z,\alpha)=(z,\beta)$ for some $\beta$ (note that $\pi(z_1,\alpha_1)=(z_1,\beta)$ and $\pi(z_2,\alpha_2)=(z_2,\beta)$ does not imply $\alpha_1=\alpha_2$). We define the action of $\cG$ on $\PP$. If $\pi\in\cG$ we define \[\pi p(\pi(z,\alpha),\gamma)=p((z,\alpha),\gamma).\]

We extend the action of $\cG$ to the class of $\PP$-names. We first make the following observation. For all $z\in\cZ,\alpha<\kappa$ we have that \[\pi\dot r_{z,\alpha}=\set{(\pi p,\pi\check\gamma)}[p((z,\alpha),\gamma)=1]=\set{(p,\check\gamma)}[p(\pi(z,\alpha),\gamma)=1]=\dot r_{\pi(z,\alpha)},\] and for any $z\in\cZ$ we have that $\pi\dot R_z=\dot R_z$.

Let $I=[\cZ\times\kappa]^{<\kappa}$, let $\cF$ be the filter generated by $\fix(E)$ for $E\in I$, namely \[\cF=\set{H\leq\cG}[\exists E\in I:\fix(E)\leq H].\] It is left to the reader to verify that $\cF$ is indeed a normal filter of subgroups and that $\cF$ is $\kappa$-closed. Let $\HS$ denote the class of hereditarily symmetric $\PP$-names, and let $\fN$ be $\HS^G$. If $E\in I$ and $\fix(E)\leq\sym(\dot x)$ we say that $E$ is a \textbf{support} of $\dot x$.

We have that $\fN\subseteq\fM[G]$ is a model of $\ZF$. It follows from the $\kappa$-closure of $\PP$ and $\cF$ that the conditions for \autoref{Lemma:ClosureDC} hold, and thus $\fN\models\DC_{<\kappa}$. We will see later that the axiom of choice, indeed $\DC_\kappa$ itself does not hold in $\fN$.

\begin{prop}\label{p:canonical-symmetric}
For all $z\in\cZ$ and $\alpha<\kappa$, $r_{z,\alpha}\in\fN$ and $R_z\in\fN$.
\end{prop}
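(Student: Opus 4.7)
The plan is to unpack the definition of $\fN$: since $\fN = \HS^G$, to show $r_{z,\alpha} \in \fN$ and $R_z \in \fN$ it suffices to exhibit hereditarily symmetric names for them, and the natural candidates are the canonical names $\dot r_{z,\alpha}$ and $\dot R_z$ already introduced. So the entire proof reduces to two routine support computations using the observations already made just before the proposition.

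For $\dot r_{z,\alpha}$, the excerpt already records that $\pi\dot r_{z,\alpha} = \dot r_{\pi(z,\alpha)}$ for every $\pi \in \cG$. Consequently any $\pi$ fixing $(z,\alpha)$ pointwise satisfies $\pi\dot r_{z,\alpha} = \dot r_{z,\alpha}$, so $\fix(\{(z,\alpha)\}) \leq \sym(\dot r_{z,\alpha})$. Since $\{(z,\alpha)\} \in I$, this witnesses $\sym(\dot r_{z,\alpha}) \in \cF$; in other words $\{(z,\alpha)\}$ is a support of $\dot r_{z,\alpha}$. The names occurring inside $\dot r_{z,\alpha}$ are of the form $\check\gamma$ for $\gamma < \kappa$, and canonical names of ground model sets are fixed by every $\pi \in \cG$ (as noted when extending $\pi$ to names), hence are hereditarily symmetric by a trivial induction on rank. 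Thus $\dot r_{z,\alpha} \in \HS$ and its interpretation $r_{z,\alpha}$ lies in $\fN$.

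For $\dot R_z$, the excerpt again records that $\pi\dot R_z = \dot R_z$ for every $\pi \in \cG$: indeed $\pi$ permutes the set $\{z\}\times\kappa$ among itself, so as $\alpha$ ranges over $\kappa$ the index $\pi(z,\alpha)$ also ranges bijectively over $\{z\}\times\kappa$, and the $\bullet$-notation is insensitive to reindexing. Hence $\sym(\dot R_z) = \cG \in \cF$. The names appearing in $\dot R_z$ are precisely the $\dot r_{z,\alpha}$, which we have just placed in $\HS$. Therefore $\dot R_z \in \HS$ and $R_z \in \fN$.

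There is no real obstacle here; the proposition is essentially a bookkeeping check that the supports are small enough to sit inside $I$, and that the generic objects of interest are indeed captured by symmetric names. The only minor point worth stating explicitly is that singletons $\{(z,\alpha)\}$ lie in $I = [\cZ\times\kappa]^{<\kappa}$ (which is immediate as $\kappa$ is infinite), so that the support given for $\dot r_{z,\alpha}$ really produces a subgroup in $\cF$.
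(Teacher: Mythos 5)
Your proof is correct and follows essentially the same route as the paper: you use the recorded identities $\pi\dot r_{z,\alpha}=\dot r_{\pi(z,\alpha)}$ and $\pi\dot R_z=\dot R_z$ to witness $\set{(z,\alpha)}$ as a support of $\dot r_{z,\alpha}$ and $\varnothing$ as a support of $\dot R_z$, checking hereditary symmetry via the canonical names $\check\gamma$ and the already-established $\dot r_{z,\alpha}\in\HS$. Your added details (the fiberwise reindexing argument for $\pi\dot R_z=\dot R_z$ and the remark that singletons lie in $I$) are fine elaborations of steps the paper leaves implicit.
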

\begin{proof}
The above observation shows that for every $\pi\in\cG$ and $(z,\alpha)\in\cZ\times\kappa$,\[\pi\dot r_{z,\alpha}=\dot r_{\pi(z,\alpha)},\ \pi\dot R_z=\dot R_z.\]
From that follows that $\set{(z,\alpha)}$ is a support for $\dot r_{z,\alpha}$ (and clearly every name appearing in $\dot r_{z,\alpha}$ is symmetric, being a canonical name of an ordinal). Therefore $\dot r_{z,\alpha}\in\HS$. Now all the names appearing in $\dot R_z$ are from $\HS$, and having $\varnothing$ as a support we have that $\dot R_z\in\HS$ as well. Therefore the sets $r_{z,\alpha}$, $R_z$ are all in $\fN$.
\end{proof}

Two facts which are useful for later are:
\begin{fact}
For every $Q\in\fN$ such that $Q\subseteq\cZ$, $D_Q\in\fN$. Moreover, the function $Q\mapsto D_Q$ is in $\fN$.
\end{fact}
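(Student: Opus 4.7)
The plan is to produce a single hereditarily symmetric name for the function $z\mapsto R_z$ on $\cZ$. Once this function lies in $\fN$, the assignment $Q\mapsto D_Q$ becomes internally definable in $\fN$ as $Q\mapsto\bigcup_{z\in Q}R_z$, which immediately yields both halves of the fact.

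First I would introduce the name
\[
\dot F = \set{(\check z,\dot R_z)^\bullet}[z\in\cZ]^\bullet
\]
and check that $\dot F\in\HS$. Since every $z\in\cZ$ lies in $\fM$, each $\pi\in\cG$ fixes $\check z$; and by the observation recorded just before \autoref{p:canonical-symmetric}, each $\pi\in\cG$ also fixes $\dot R_z$. Hence every $\pi\in\cG$ fixes each pair appearing in $\dot F$, so $\sym(\dot F)=\cG\in\cF$. All subnames of $\dot F$ are either canonical ground-model names (automatically in $\HS$) or names $\dot R_z$ already shown to be in $\HS$ by \autoref{p:canonical-symmetric}, so $\dot F\in\HS$. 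Consequently the function $F\colon\cZ\to\fN$ with $F(z)=R_z$ belongs to $\fN$.

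Working inside $\fN$ with $F$ as a parameter, I would then invoke Replacement to define $G\colon\power(\cZ)^\fN\to\fN$ by $G(Q)=\bigcup_{z\in Q}F(z)$. This function is a set in $\fN$, and by construction $G(Q)=D_Q$ for every $Q\subseteq\cZ$ in $\fN$. Reading off the graph of $G$ simultaneously gives $D_Q\in\fN$ for each such $Q$ and exhibits the function $Q\mapsto D_Q$ as an element of $\fN$.

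The only real content lies in the first step, namely producing $\dot F$ and verifying that it has full symmetry; but this reduces to the two identities $\pi\check z=\check z$ and $\pi\dot R_z=\dot R_z$, both of which are already available. I therefore do not anticipate a genuine obstacle.
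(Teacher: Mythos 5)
Your proposal is correct and follows essentially the same route as the paper: both introduce the name $\dot F=\set{(\check z,\dot R_z)^\bullet}[z\in\cZ]^\bullet$, verify $\sym(\dot F)=\cG\in\cF$ (with hereditary symmetry coming from \autoref{p:canonical-symmetric} and the canonical names $\check z$), and then recover $D_Q=\bigcup_{z\in Q}F(z)$ inside $\fN$. Your explicit appeal to Replacement to package $Q\mapsto D_Q$ as a set function on $\power(\cZ)^\fN$ merely spells out what the paper leaves implicit.
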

\begin{proof}
Note that $\dot F=\set{\left(\check z,\dot R_z\right)^\bullet}[z\in\cZ]^\bullet$ is in $\HS$, since for every $z$ we have that $\sym(\dot R_z)=\cG$. Let $F=\dot F^G$, then for $Q\subseteq\cZ$ we have $D_Q=\bigcup_{z\in Q} F(z)$. Therefore whenever $Q\in\fN$, $D_Q\in\fN$ as well.
\end{proof}

We remark that from a name $\dot Q\in\HS$ for a subset of $\cZ$ one can give a (relatively) canonical name for $D_Q$ which has the same support as $\dot Q$. However by showing that $D_Q\in\fN$ is definable from $Q\in\fN$ we in fact prove that there is such name.

\begin{fact}\label{Fact:Onto}
The following is true in $\fN$. For every $z\in\cZ$ we have that $R_z$ can be mapped onto $\kappa$, and therefore $D_Q$ can be mapped onto $\kappa$ for every non-empty $Q$.
\end{fact}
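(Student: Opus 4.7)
The plan is to exhibit the single map $\phi(d) = \min d$ (viewing elements of $R_z$ and $D_Q$ as subsets of $\kappa$) and show that it is a surjection onto $\kappa$ living in $\fN$.

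First I would verify that $\phi$ is well-defined on $R_z$, i.e.\ that each $r_{z,\alpha}$ is non-empty. This follows from a standard density argument: for any condition $p$, since $|\dom p| < \kappa$, one can pick $\gamma < \kappa$ with $((z,\alpha),\gamma) \notin \dom p$ and extend $p$ by setting this coordinate to $1$; hence $r_{z,\alpha}$ is in fact cofinal in $\kappa$. Next I would check that $\phi \in \fN$: by \autoref{p:canonical-symmetric} we have $R_z \in \fN$, and $\phi$ is the set
\[\phi = \set{(r,\gamma) \in R_z\times\kappa}[\gamma \in r \wedge (\forall \delta < \gamma)\, \delta \notin r],\]
which is defined by an absolute formula from parameters in $\fN$, so separation inside $\fN$ gives $\phi \in \fN$.

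The heart of the argument is surjectivity. Fix $\gamma < \kappa$; I would show that the set of conditions $q \in P$ for which there exists $\alpha < \kappa$ with $q((z,\alpha),\beta) = 0$ for every $\beta < \gamma$ and $q((z,\alpha),\gamma) = 1$ is dense. Given $p \in P$, since $|\dom p| < \kappa$ and $\kappa$ is regular, some $\alpha < \kappa$ satisfies $((z,\alpha),\beta) \notin \dom p$ for every $\beta$. Extend $p$ by assigning the required values on $\{(z,\alpha)\} \times (\gamma+1)$; the enlarged domain still has size $< \kappa$, so the extension is a legitimate condition, and it forces $\min \dot r_{z,\alpha} = \check\gamma$. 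By genericity there is then some $\alpha$ in $\fM[G]$ with $\min r_{z,\alpha} = \gamma$, so $\gamma \in \rng(\phi)$.

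For the second clause, given a non-empty $Q \subseteq \cZ$ in $\fN$, pick any $z \in Q$; the same rule $d \mapsto \min d$ defines a map $D_Q \to \kappa$ in $\fN$ that is onto because $R_z \subseteq D_Q$ and its restriction to $R_z$ is already onto. The step I expect to require the most care is verifying $\phi \in \fN$, but this reduces entirely to the definability observation above and requires no additional symmetry argument beyond \autoref{p:canonical-symmetric}.
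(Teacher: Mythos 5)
Your proposal is correct and follows exactly the paper's argument: the paper also uses the map $r_{z,\alpha}\mapsto\min r_{z,\alpha}$, notes it is well-defined in $\fN$, and cites a simple density argument for surjectivity in $\fM[G]$ (hence in $\fN$). You have merely spelled out the density details and the definability of the map inside $\fN$, both of which the paper leaves implicit.
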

\begin{proof}
The map $r_{z,\alpha}\mapsto\min r_{z,\alpha}$ is well-defined in $\fN$, and by a simple density argument one can see it is surjective in $\fM[G]$ and therefore in $\fN$ as well. 
\end{proof}

\begin{prop}\label{Prop:ACfails}
In $\fN$ it is true that for every $z\in\cZ$ we have that $R_z$ cannot be well-ordered, and therefore the axiom of choice fails.
\end{prop}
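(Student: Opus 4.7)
The plan is to argue by contradiction. Since $\AC$ is equivalent to the well-orderability of every set, the ``$\AC$ fails'' half of the proposition is immediate from the $R_z$ half, so I would focus on showing that no injection $f \colon R_z \to \Ord$ exists in $\fN$. Suppose such an injection did exist, with hereditarily symmetric name $\dot f$ and support $E \in I$; observe first that $\alpha \mapsto r_{z,\alpha}$ is a genuine bijection $\kappa \to R_z$ in $\fM[G]$ (a trivial density argument shows $r_{z,\alpha} \neq r_{z,\beta}$ for $\alpha \neq \beta$), so the values $\gamma_\alpha = f(r_{z,\alpha})$ are pairwise distinct ordinals.

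The idea is to swap two ``fresh'' indices via the Symmetry Lemma. First I would pick $\alpha < \kappa$ with $(z,\alpha) \notin E$ and a condition $p \in G$ forcing $\dot f(\dot r_{z,\alpha}) = \check\gamma_\alpha$. Next I would pick $\beta \neq \alpha$ such that $(z,\beta) \notin E$ \emph{and} no coordinate of the form $((z,\beta),\delta)$ appears in $\dom p$; both demands can be met simultaneously because $|E|$ and $|\dom p|$ are each $<\kappa$. Then the transposition $\pi \in \cG$ swapping $(z,\alpha)$ and $(z,\beta)$ (and fixing the rest) lies in $\fix(E) \leq \sym(\dot f)$, so $\pi \dot f = \dot f$; together with the identity $\pi \dot r_{z,\alpha} = \dot r_{z,\beta}$ noted before \autoref{p:canonical-symmetric}, the Symmetry Lemma gives $\pi p \forces \dot f(\dot r_{z,\beta}) = \check\gamma_\alpha$.

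The crucial step — and the one that dictates the choice of $\beta$ — is that $p$ and $\pi p$ are compatible: $\pi$ acts only on the coordinates $(z,\alpha)$ and $(z,\beta)$, and by the choice of $\beta$ the condition $p$ is silent on $(z,\beta)$, so $q = p \cup \pi p$ is a well-defined condition. It then forces $\dot f(\dot r_{z,\alpha}) = \dot f(\dot r_{z,\beta}) = \check\gamma_\alpha$, hence $q \forces \dot r_{z,\alpha} = \dot r_{z,\beta}$ by injectivity, which is flatly contradicted by a final density extension that sets $((z,\alpha),\delta)$ and $((z,\beta),\delta)$ to opposite values at some $\delta$ outside $\dom q$. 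The main obstacle is precisely engineering this compatibility, which is why $\beta$ must avoid not just the support $E$ but also the (small) set of first coordinates occurring in $\dom p$; once that careful selection is made, the rest is routine bookkeeping.
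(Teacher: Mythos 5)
Your proof is correct, and the engine is the same as the paper's --- a two-point swap $\pi \in \fix(E)$ of fresh indices $(z,\alpha), (z,\beta)$, with $\beta$ chosen so that the condition is silent on $(z,\beta)$, forcing compatibility of the condition with its image --- but the setup is mirrored. The paper first reduces via \autoref{Fact:Onto}: since $R_z$ maps onto $\kappa$, a well-ordering of $R_z$ would invert that surjection, so it suffices to refute an injection $\dot f \colon \check\kappa \to \dot R_z$; the swap then moves the \emph{value}, giving $q \forces \dot f(\check\tau) = \dot r_{z,\alpha}$ and $\pi q \forces \dot f(\check\tau) = \dot r_{z,\beta}$, and the contradiction is with functionality. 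You instead attack an injection $f \colon R_z \to \Ord$ directly, so $\pi$ moves the \emph{argument} while fixing the value $\check\gamma_\alpha$, and the contradiction is with injectivity: $q \forces \dot r_{z,\alpha} = \dot r_{z,\beta}$, refuted by a final density extension separating the two reals. Your route dispenses with \autoref{Fact:Onto} entirely, which is a small economy, and you make explicit a step the paper glosses over --- in either version one must still force $\dot r_{z,\alpha} \neq \dot r_{z,\beta}$ below the common extension, exactly your last density argument. What the paper's direction buys in exchange is the stronger conclusion recorded immediately after the proposition: it shows $\kappa \nleq |R_z|$, which together with the surjection onto $\kappa$ yields that $\kappa$ and $|R_z|$ are \emph{incomparable}, hence the failure of $\DC_\kappa$; your version gives $|R_z| \nleq |\lambda|$ for every ordinal $\lambda$, which suffices for the proposition as stated but not, by itself, for that remark. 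One pedantic point: when you pick $p \in G$ forcing $\dot f(\dot r_{z,\alpha}) = \check\gamma_\alpha$, you should also extend $p$ (still within $G$) to force that $\dot f$ is an injective function, since the injectivity step at the end is applied below $q \leq p$; this is routine but worth saying.
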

\begin{proof}
We know that $R_z$ can be mapped onto $\kappa$, therefore it would suffice to show that there is no injection from $\kappa$ into $R_z$. If $R_z$ could have been well-ordered such a surjection could have been reversed to an injection from $\kappa$.

Towards a contradiction that $p\forces\expr{\dot f\colon\check\kappa\to\dot R_z\text{ is injective}}$, and $\dot f\in\HS$. Let $E$ be a support for $\dot f$, and let $q\leq p$ be a condition that there are $\alpha,\tau<\kappa$ such that $(z,\alpha)\notin E$ and $q\forces\dot f(\check\tau)=\dot r_{z,\alpha}$.

We can now find $\beta\neq\alpha$ such that $(z,\beta)\notin E$ and for all $\gamma$, $((z,\beta),\gamma)\notin\dom q$. We define the following $\pi\in\cG$: $\pi(z,\alpha)=(z,\beta), \pi(z,\beta)=(z,\alpha)$ and $\pi(x,y)=(x,y)$ otherwise. Clearly $\pi\in\fix(E)$ and therefore $\pi\dot f=\dot f$. By the symmetry lemma we have that \[\pi q\forces\dot f(\check\tau)=\dot r_{z,\beta}.\]

If $q$ and $\pi q$ are compatible then $q$ has an extension which forces that $\dot f$ is not a function, which is a contradiction. Suppose $((t,\varepsilon),\delta)\in\dom q\cap\dom\pi q$, if $t\neq z$ then $\pi(t,\varepsilon)=(t,\varepsilon)$ and by the definition of $\pi q$ we have \[\pi q((t,\varepsilon),\delta)=\pi q(\pi(t,\varepsilon),\delta)=q(t,\varepsilon,\delta).\] Otherwise $t=z$, if $\varepsilon\notin\set{\alpha,\beta}$ then $\pi(t,\varepsilon)=(t,\varepsilon)$ and so $q((t,\varepsilon),\delta)=\pi q((t,\varepsilon),\delta)$. Moreover, if $t=z$, then $\varepsilon\neq\beta$. Recall the choice of $\beta$ was such that: \[((z,\beta),\delta)\notin\dom q.\] Finally, if $((z,\alpha),\delta)\in\dom\pi q$, then $(\pi^{-1}(z,\alpha),\delta)=((z,\beta),\delta)\in\dom q$, and so it is impossible that $t=z$ and $\varepsilon=\alpha$. Therefore $q$ and $\pi q$ agree on all the points in their common domain, and are compatible, which is our desired contradiction.

Therefore in $\fN$ there is no injection from $\kappa$ into $R_z$, and choice fails.
\end{proof}

We have in fact shown that $\kappa$ and $R_z$ have incomparable cardinalities in $\fN$, and therefore $\DC_\kappa$ fails as promised.

\begin{theorem}\label{Thm:IncomparabilityOfChunks}
Let $Q,T\subseteq\cZ$ in $\fN$. If $Q\nsubseteq T$ then $\fN\models |D_Q|\nleq|D_T|$ and $|D_Q|\nleq^\ast|D_T|$.
\end{theorem}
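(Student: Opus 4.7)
The plan is to prove the stronger statement $|D_Q| \nleq^\ast |D_T|$, which automatically gives $|D_Q| \nleq |D_T|$ since $|A| \leq |B|$ entails $|A| \leq^\ast |B|$. Suppose for contradiction that $p \in \PP$ forces $\dot f \in \HS$ to be a surjection from $\dot D_T$ onto $\dot D_Q$, where $\dot D_T, \dot D_Q$ are the canonical names obtained from names $\dot T, \dot Q \in \HS$ via the function $\dot F$ of the preceding fact. Let $E \in I$ be a support for $\dot f$. Pick $z_0 \in Q \setminus T$, and by further strengthening assume $p \forces \check z_0 \in \dot Q \setminus \dot T$.

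Since $\kappa$ is regular and $|E| < \kappa$, there is $\alpha < \kappa$ with $(z_0, \alpha) \notin E$, and $p$ forces $\dot r_{z_0, \alpha} \in \dot D_Q$. Using surjectivity, extend $p$ to some $q$ and find $z' \in \cZ$, $\beta < \kappa$ with
\[q \forces \check z' \in \dot T \quad\text{and}\quad q \forces \dot f(\dot r_{z', \beta}) = \dot r_{z_0, \alpha}.\]
Crucially $z' \neq z_0$, since $q \leq p \forces \check z_0 \notin \dot T$. Next choose $\alpha' \neq \alpha$ such that $(z_0, \alpha') \notin E$ and no pair $((z_0, \alpha'), \gamma)$ lies in $\dom q$; this is possible because $|E| + |\dom q| < \kappa$ and $\kappa$ is regular.

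Let $\pi \in \cG$ be the transposition of $(z_0, \alpha)$ and $(z_0, \alpha')$, fixing all other coordinates. Then $\pi \in \fix(E)$, giving $\pi \dot f = \dot f$; also $\pi(z', \beta) = (z', \beta)$ since $z' \neq z_0$, so $\pi \dot r_{z', \beta} = \dot r_{z', \beta}$, while $\pi \dot r_{z_0, \alpha} = \dot r_{z_0, \alpha'}$. The Symmetry Lemma gives $\pi q \forces \dot f(\dot r_{z', \beta}) = \dot r_{z_0, \alpha'}$. Compatibility of $q$ and $\pi q$ follows verbatim from the computation in \autoref{Prop:ACfails}, using the freshness of $\alpha'$. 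A common extension therefore forces $\dot r_{z_0, \alpha} = \dot r_{z_0, \alpha'}$; but a standard density argument produces an extension on which these generic sets disagree at some $\gamma$, so no condition forces them equal, contradiction.

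The main subtlety beyond \autoref{Prop:ACfails} is the extraction step: one must pull a single preimage out of $\dot f$ as a canonical name $\dot r_{z', \beta}$ with $z' \in T$, and it is precisely the inequality $z' \neq z_0$ (forced by $z_0 \notin T$) that allows $\pi$ to fix the chosen preimage while moving its image. This is also why the hypothesis of the theorem is $Q \nsubseteq T$ rather than merely $Q \neq T$.
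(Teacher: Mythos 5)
Your proof is correct and follows essentially the same route as the paper's: reduce $\nleq$ to $\nleq^\ast$, extract a forced preimage $\dot r_{z',\beta}$ with $z'\in T$ (hence $z'\neq z_0$), swap $(z_0,\alpha)$ with a fresh $(z_0,\alpha')$ outside $E\cup\dom q$, and use the Symmetry Lemma plus compatibility of $q$ and $\pi q$ to contradict functionality of $\dot f$. The only differences are cosmetic: you make explicit the density argument that $\dot r_{z_0,\alpha}\neq\dot r_{z_0,\alpha'}$ (which the paper leaves implicit), and you take $E$ to support only $\dot f$ where the paper conservatively supports $\dot f,\dot Q,\dot T,\dot D_Q,\dot D_T$ as well, though that extra support is never actually needed.
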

\begin{proof}
If there had been an injection from $D_Q$ into $D_T$ then there would have been a surjection from $D_T$ onto $D_Q$. It is therefore sufficient to argue for the $\nleq^\ast$ case.

Let $\dot Q$ and $\dot T$ be names for $Q$ and $T$ respectively, both in $\HS$, and let $\dot D_Q$ and $\dot D_T$ be names in $\HS$ for $D_Q$ and $D_T$ respectively.

Suppose that $p\forces\expr{\dot f\colon\dot D_T\to\dot D_Q\text{ is surjective}}$ for some $\dot f\in\HS$, we will prove that $p\forces\dot Q\subseteq\dot T$. Assume towards contradiction that is not the case, if $p$ does not decide the statement $\dot Q\nsubseteq\dot T$, then it has an extension deciding it and we shall take it instead. So we may assume a stronger assumption towards contradiction, $p\forces\dot Q\nsubseteq\dot T$. Let $E$ be a support for the names $\dot f,\dot Q,\dot T,\dot D_Q,\dot D_S$. 

Let $q\leq p$ be an extension such that there are some $z,t\in\cZ$ and $\alpha,\delta<\kappa$ such that $(z,\alpha)\notin E$ and $q\forces\check z\in\dot Q\setminus\dot T\land\check t\in\dot T\land\dot f(\dot r_{t,\delta})=\dot r_{z,\alpha}$. This implies that $z\neq t$. Let $\beta\neq\alpha$ be such that $(z,\beta)\notin E$ and there is no $\gamma<\kappa$ for which $((z,\beta),\gamma)\in\dom q$. We define $\pi$ to be the permutation such that $\pi(z,\alpha)=(z,\beta),\pi(z,\beta)=(z,\alpha)$ and $\pi(x,y)=(x,y)$ otherwise. We have that $\pi\in\fix(E)$ and therefore all the names of interest are not changed by $\pi$.

We have that $\pi q\forces\dot f(\dot r_{t,\delta})=\dot r_{z,\beta}$. However a simple  verification as in the proof of \autoref{Prop:ACfails} shows that $q$ and $\pi q$ are compatible and therefore $q$ has an extension which forces $\dot f$ is not a function, which is a contradiction.
\end{proof}

\section{Embedding a Proper Class}

In this section we extend the result by Roguski (\cite{Roguski:1990}) in which he proves the following theorem:

\begin{theorem*}[Roguski] Let $\fM$ be a countable transitive model of $\ZFC$, and $(I,\preceq)$ a partially ordered class such that $I,\preceq$ are both classes of $\fM$ and every initial segment of $(I,\preceq)$ belongs to $\fM$. Then there is a countable transitive model $\fN$ for $\ZF$, which is a symmetric extension of $\fM$ and a class $\set{S_i}[i\in I]$ in $\fN$ such that for all $i,j\in I$, \[i\preceq j \leftrightarrow\fN\models|S_i|\leq|S_j|.\]
\end{theorem*}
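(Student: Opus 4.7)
The plan is to adapt the construction of Section~3 to the class-sized partial order $(I,\preceq)$, taking $\cZ=I$ and any regular $\kappa$ (for Roguski's original statement $\kappa=\omega$ already suffices). For each $i\in I$ set $Q_i=\set{j\in I}[j\preceq i]$; by the initial-segment hypothesis this is a set in $\fM$. We will put $S_i=D_{Q_i}$ and verify that $i\mapsto S_i$ is the desired embedding.

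The first obstacle is the passage from set-forcing to class-forcing: with $\cZ=I$ a proper class, $\PP$, the automorphism group $\cG$, and the filter $\cF$ all become proper classes in $\fM$. Each individual condition, each non-trivially-acting element of $\cG$, and each support $E\in[I\times\kappa]^{<\kappa}$ is nevertheless a set, and the $I$-projection of any such object is contained in a union of fewer than $\kappa$-many initial segments $Q_i$, hence a set. Consequently every hereditarily symmetric name ``lives over'' an initial segment $I'\subseteq I$ that is already a set in $\fM$, and $\fN$ can be presented as the directed union, indexed by the initial segments $I'$ of $I$, of the set-sized symmetric extensions $\fN_{I'}$ obtained by running the Section~3 construction with parameter $\cZ=I'$. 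The $\ZF$-axioms then lift to $\fN$ from the $\fN_{I'}$, and countability of $\fN$ follows from countability of $\fM$ because each $\fN_{I'}$ sits inside a set-generic extension of $\fM$.

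Once $\fN$ is in hand, the map $i\mapsto S_i=D_{Q_i}$ is well-defined in $\fN$ by the canonical-name construction used in \autoref{Fact:Onto}, applied now to the definable class $\set{(\check i,\dot R_i)^\bullet}[i\in I]$. The easy direction of the biconditional is immediate: if $i\preceq j$ then transitivity gives $Q_i\subseteq Q_j$, so $D_{Q_i}\subseteq D_{Q_j}$ witnesses $|S_i|\leq|S_j|$ in $\fN$. For the converse, if $i\not\preceq j$ then by reflexivity $i\in Q_i\setminus Q_j$, hence $Q_i\nsubseteq Q_j$, and \autoref{Thm:IncomparabilityOfChunks} applied inside a sufficiently large $\fN_{I'}$ (one containing supports for the names $\dot f,\dot Q_i,\dot Q_j,\dot D_{Q_i},\dot D_{Q_j}$) delivers $|S_i|\nleq|S_j|$ in $\fN$.

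The main technical burden, therefore, is the class-forcing bookkeeping in the second paragraph: one must check that the directed union of the set-sized symmetric extensions really is a transitive model of $\ZF$, rather than merely a class satisfying each axiom separately. The swap-and-check arguments of \autoref{Prop:ACfails} and \autoref{Thm:IncomparabilityOfChunks} relativise without change to any initial segment containing the relevant names, so no genuinely new forcing-theoretic idea is needed beyond careful organisation of the class forcing as a filtered colimit of the set forcings from Section~3.
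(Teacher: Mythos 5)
Your reduction of the class case to a filtered colimit of the Section~3 construction fails at exactly the point you defer as ``the main technical burden'': with $\cZ=I$ a proper class and \emph{all} generic sets concentrated at the single cardinal $\kappa$, the directed union $\fN=\bigcup_{I'}\fN_{I'}$ is provably \emph{not} a model of $\ZF$, because the power set axiom fails at $\kappa$. For every $z\in I$ the generic set $r_{z,0}\subseteq\kappa$ has the hereditarily symmetric name $\dot r_{z,0}$ with support $\set{(z,0)}$, so the class-many, pairwise distinct (by a trivial density argument) sets $\set{r_{z,0}}[z\in I]$ all belong to $\fN$; hence $\power(\kappa)\cap\fN$ is a proper class and $\fN$ satisfies ``$\power(\kappa)$ is not a set''. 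The $\ZF$ axioms do not simply lift from the $\fN_{I'}$ to the union: power set is precisely the axiom that is destroyed by this colimit, and indeed already $\fM[G]$ fails it, since your class forcing is essentially the forcing to add $\Ord$-many Cohen subsets of $\kappa$. So the closing assertion that ``no genuinely new forcing-theoretic idea is needed beyond careful organisation'' is wrong --- the organisation you describe cannot produce a $\ZF$ model, no matter how carefully it is carried out.

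The paper's Section~4 proof exists precisely to dodge this obstruction, and this is its one genuinely new ingredient relative to Section~3: after forcing a global well-order (without adding sets) and assuming $\axiom{GCH}$, it identifies $I$ with a class of \emph{regular cardinals above} $\kappa$ and forces with the Easton support product $\prod_{i\in I}\PP_i$, where $\PP_i$ adds $i$ subsets of $i$. Because the generic sets are spread over unboundedly many cardinals, for any fixed $\lambda$ only the set-sized initial part $\PP^{\leq\lambda}$ contributes new subsets of $\lambda$ (the tail is sufficiently closed), so power set survives in $\fN=\bigcup_{i\in I}\fN_i$; the local filters $\cF_i$ on the groups $\cG_i$ of set-supported automorphisms make each $\fN_i$ a set-sized symmetric extension. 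Your remaining steps --- taking $S_i=D_{Q_i}$ with $Q_i=\set{j\in I}[j\preceq i]$, the easy direction via $Q_i\subseteq Q_j$, and localizing the swap argument of \autoref{Thm:IncomparabilityOfChunks} to a set-sized stage as in \autoref{Prop:ComparabilityClass} --- do match the paper's, but they are downstream of having a $\ZF$ model, which your uniform-$\kappa$ forcing does not deliver.
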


From this theorem he draws the consistency of a proper class of pairwise incomparable cardinals. However, it seems that Roguski is proving less than he claims to prove. Roguski embeds a proper class into the cardinals of a model of $\ZF$, however it is unclear that the class function $i\mapsto S_i$ definable internally to that model. Roguski's proof shows, instead, that given any set of cardinals, there is one incomparable to all of them. Using \autoref{Thm:ClassThm} we will show that such result can be extended so that $\DC_{<\kappa}$ is preserved for a fixed $\kappa$, and that we may replace $|S_i|\leq|S_j|$ by $|S_i|\leq^\ast|S_j|$.

Let $\fM$ be a countable transitive model of $\ZFC+\axiom{GCH}$, $\kappa$ a regular cardinal in $\fM$ and $(I,\preceq)$ a partially ordered class in $\fM$ such that every initial segment of $I$ is a set of $\fM$. Without loss of generality we may assume that there is a class in $\fM$ which well-orders $I$, for otherwise we can force such class without adding sets. Therefore we may assume that $I\subseteq\Ord^\fM$. In this section we shall prove the following theorem:

\begin{theorem}\label{Thm:ClassThm}
There exists a class-generic extension $\fM[G]$ with an intermediate model $\fN\subseteq\fM[G]$ such that $\fN\models\ZF+\DC_{<\kappa}$ in which $(I,\preceq)$ can be embedded into the cardinals of $\fN$ with the order $\leq$ or with the order $\leq^\ast$, such that every initial segment of this embedding if in $\fN$ and the embedding is definable in $\fM[G]$.
\end{theorem}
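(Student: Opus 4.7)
The plan is to lift the construction of Section~3 verbatim from the set $\cZ$ to the class $I$, handling the resulting class-forcing with the standard care. Define $\PP=(P,\leq)$ in $\fM$ to be the (proper-class) forcing of partial functions $p\colon(I\times\kappa)\times\kappa\to 2$ with $|\dom p|<\kappa$, ordered by reverse inclusion. Let $\cG$ be the class of permutations of $I\times\kappa$ that fix the first coordinate, acting on $\PP$ exactly as before, and let $\cF$ be the filter generated by $\fix(E)$ for $E\in[I\times\kappa]^{<\kappa}$. Define $\HS$ and $\fN=\HS^G$ as in Section~3. For each $i\in I$ set $I_i=\{j\in I : j\preceq i\}$; by hypothesis $I_i$ is a set in $\fM$ and hence in $\fN$, so $D_{I_i}$ is defined and lies in $\fN$.

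The main technical point is that $\PP$ is a proper class. I would handle this by presenting $\PP$ as a directed union $\PP=\bigcup_{\alpha\in\Ord^\fM}\PP_\alpha$, where $\PP_\alpha$ is the set-sized forcing from Section~3 associated with the set $I\cap\alpha\in\fM$. Because each condition has support of size $<\kappa$, every condition lies in some $\PP_\alpha$, and every hereditarily symmetric name is determined (by induction on rank) by its $<\kappa$-sized support together with its set-sized ancestral data. This is the situation handled by Friedman-style pretameness, so the forcing theorem, the preservation of $\ZF$ in $\fM[G]$, the Symmetry Lemma, and the fact that $\fN$ is a transitive class model of $\ZF$ all carry over to this class-forcing symmetric extension. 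This is the step that requires the most care and is the main obstacle; once it is settled, the rest of the argument is essentially a rerun of Section~3.

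With the machinery in place, the remaining arguments are purely local. \autoref{Lemma:ClosureDC} transfers immediately: $\PP$ is $\kappa$-closed (the union of a $<\kappa$-chain of conditions of size $<\kappa$ still has size $<\kappa$) and $\cF$ is $\kappa$-complete (intersecting fewer than $\kappa$ supports yields another support), so $\fN\models\DC_{<\kappa}$. The proof of \autoref{Thm:IncomparabilityOfChunks} uses only a single pair of coordinates $(z,\alpha),(z,\beta)$ together with a set-sized support for the names involved, so it transfers verbatim: for $Q,T\in\fN$ with $Q,T\subseteq I$, $Q\not\subseteq T$ implies $\fN\models|D_Q|\not\leq^\ast|D_T|$. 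Combined with the trivial implication $Q\subseteq T\Rightarrow D_Q\subseteq D_T$, we obtain
\[i\preceq j\iff I_i\subseteq I_j\iff \fN\models|D_{I_i}|\leq|D_{I_j}|\iff \fN\models|D_{I_i}|\leq^\ast|D_{I_j}|.\]
The map $i\mapsto D_{I_i}$ is definable in $\fM[G]$ from the canonical names $\dot R_z$ and from $(I,\preceq)$, and its restriction to any initial segment $\{j : j\preceq i\}$ lies in $\fN$ since it is computable from $I_i\in\fN$, giving the theorem as stated.
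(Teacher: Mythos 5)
There is a genuine gap, and it is fatal to the approach rather than a fixable technicality: your forcing adds a proper class of distinct subsets of the \emph{single} cardinal $\kappa$. For every $z\in I$ and $\alpha<\kappa$ the set $r_{z,\alpha}\subseteq\kappa$ belongs to $\fN$ (you need this --- your embedding is built from the $R_z$), and a routine density argument makes the $r_{z,\alpha}$ pairwise distinct. Since $I$ is a proper class, $\power(\kappa)$ as computed in $\fM[G]$, and indeed already the class $\{r_{z,\alpha} : z\in I,\ \alpha<\kappa\}\subseteq\fN$, is a proper class; hence the Power Set axiom fails in both $\fM[G]$ and $\fN$, and neither is a model of $\ZF$. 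The appeal to pretameness cannot repair this: pretameness yields the forcing theorem and preservation of $\ZF$ \emph{minus} Power Set, while preservation of Power Set requires tameness, and your $\PP$ (in essence adding $\Ord$-many subsets of $\kappa$ with $<\kappa$-supports) is the standard example of a pretame class forcing that is not tame. Passing to the symmetric submodel offers no rescue either, precisely because the offending proper class consists of hereditarily symmetric sets. So the step you flag as ``the main obstacle'' is not merely delicate --- the verbatim lifting of Section~3 to a proper class $I$ over the fixed $\kappa$ is the wrong forcing.

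The paper's proof is engineered around exactly this obstruction. After forcing a global well-ordering without adding sets, it reindexes $I$ as a class of \emph{regular cardinals} with $\min I\geq\kappa$ and, assuming $\axiom{GCH}$, takes the Easton support product $\PP=\prod_{i\in I}\PP_i$ where $\PP_i$ adds $i$ subsets of $i$. Spreading the generic sets across unboundedly many cardinals ensures that each $\power(\lambda)$ gains only set-many new elements, and Easton-style factoring gives $\fM[G]\models\ZFC$. The symmetric model is then realized as an increasing union $\fN=\bigcup_{i\in I}\fN_i$ of honest set-sized symmetric extensions $\fN_i=(\HS_i)^{G_i}$, each satisfying $\ZF+\DC_{<\kappa}$, and this union structure is what legitimizes the ``purely local'' arguments you invoke: the transfer of \autoref{Lemma:ClosureDC} and the two-point swap behind \autoref{Thm:IncomparabilityOfChunks} (yielding \autoref{Prop:ComparabilityClass}) work because every relevant name lives in some set forcing $\PP^{\leq i}$, not because a general class-forcing machinery covers the original Section~3 poset. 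Your concluding steps --- embedding via $I_i=\{j\in I : j\preceq i\}$, definability of $i\mapsto D_{I_i}$ in $\fM[G]$, and membership of initial segments in $\fN$ --- do match the paper, but only after the forcing itself is redesigned as above.
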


By embedding $(\power(I)\cap\fM,\subseteq)$ into the cardinals of the symmetric extension we will assure that $I$ has been embedded into it using the same argument as in the previous section. Note that if $I$ is actually a set in $\fM$ then \autoref{Thm:IncomparabilityOfChunks} proves the claim, so we may assume that $I$ is a proper class of $\fM$. We aim to mimic the previous proof therefore for every $i\in I$ we shall add generic subsets to a regular cardinal. In order to preserve $\DC_{<\kappa}$ we require the forcing to be $\kappa$-closed, so we will only add subsets to cardinals above $\kappa$. We will assume that $I$ is a class of regular cardinals and $\min I\geq\kappa$.

We define the forcing in $\fM$. For every $i\in I$ let $\PP_i=(P_i,\leq)$ be the forcing which adds $i$ subsets to $i$, namely $p\in P_i$ is a partial function from $i\times i$ to $2$ such that $|\dom p|<i$, and $p\leq q$ if and only if $q\subseteq p$. Let $\PP$ be the Easton support product $\prod_{i\in I}\PP_i$. We shall denote $\PP^{\leq i}$ the Easton support product of $\PP_j$ for $j\leq i$. This is a product of $\kappa$-closed forcings and therefore it is $\kappa$-closed, and we also point out that by assuming $\axiom{GCH}$ it does not change cofinalities.

The conditions in $\PP$ are functions such that $p(i)$ is a condition in $P_i$, and for all $i\in I$ we have $|\set{j\leq i}[p(j)\neq 1_{\PP_j}]|<i$. Alternatively we may think about the conditions as functions from $I\times\Ord\times\Ord$ to $\set{0,1}$ such that if $(i,\alpha,\beta)$ is in $\dom p$ then $\beta,\alpha<i$, and for every $i\in I$, $|\set{(j,\alpha,\beta)}[(j,\alpha,\beta)\in\dom p\land j\leq i]|<i$. We will identify $\PP^{\leq i}$ with those $p\in\PP$ such that $\dom p\subseteq (i+1)\times i\times i$.

If $G$ is a $\PP$-generic class over $\fM$ then in $\fM[G]$ it defines $i$ new subsets for every (regular) cardinal in $I$, and $\fM[G]$ is a model of $\ZFC$. Note that as before $\bigcup G=g$ is a class function $g\colon I\times\Ord\times\Ord\to 2$. We define the following sets from $G$ and give them canonical names:
\begin{itemize}
\item Let $i\in I$ and $\alpha<i$ then $r_{i,\alpha}=\set{\gamma<i}[g(i,\alpha,\gamma)=1]$ is given the name \[\dot r_{i,\alpha}=\set{(p,\check\gamma)}[p(i,\alpha,\gamma)=1\land p\in\PP^{\leq i}].\]
\item Let $i\in I$ we define $R_i = \set{r_{i,\alpha}}[\alpha<i]$ with the name \[\dot R_i=\set{\dot r_{i,\alpha}}[\alpha<i]^\bullet.\]
\item For a set $Q\subseteq I$ denote $D_Q=\bigcup_{i\in Q}R_i$. Of course $Q$ might be generic, and as before we do not give a name for $D_Q$.
\end{itemize}

We shall now proceed to define the symmetric extension. First we define $\cG$ to be a group of automorphisms of $\PP$, while this group will be a proper class each permutation will only move a set. We say that $\pi\in\cG$ if $\pi$ is a permutation of $I\times\Ord\times\Ord$ such that the following holds:
\begin{enumerate}
\item For all $i\in I$, if $(i,\alpha,\gamma)\in\dom\pi$ then $\alpha,\gamma<i$;
\item whenever $\pi(i,\alpha,\gamma)=(i',\alpha',\gamma')$ we have $i=i'$, $\alpha'<i$, $\gamma=\gamma'$;
\item $\DD_\pi=\set{(i,\alpha,\gamma)}[\pi(i,\alpha,\gamma)\neq(i,\alpha,\gamma)]$ is a set in $\fM$;
\item and for every $i\in I$, $|\set{(i,\alpha,\gamma)}[(i,\alpha,\gamma)\in \DD_\pi]|<i$.
\end{enumerate} 
  We define the action of $\cG$ on $\PP$ as before, \[\pi p(\pi(i,\alpha,\gamma))=p(i,\alpha,\gamma).\]

For each $i\in I$ we define $\cG_i=\set{\pi\in\cG}[\DD_\pi\subseteq (i+1)\times i\times i]$. Then $\cG_i$ is a group, $i\leq j$ implies $\cG_i\leq\cG_j$ and $\bigcup_{i\in I}\cG_i\simeq\cG$. We define $G_{\leq i}=G\cap\PP^{\leq i}$. We observe that $\fM^{\PP^{\leq i}}\subseteq\fM^{\PP^{\leq j}}$ whenever $i\leq j$, and that if $\dot x\in\fM^{\PP^{\leq i}}$ then $\dot x^{G_i}=\dot x^{G_j}$ as well.

Let $K_i=[(I\cap i^+)\times i\times i]^{<\kappa}$, and let $\cF_i$ be the $\kappa$-complete filter of subgroups of $\cG_i$ generated by $\fix(E)$ for $E\in K_i$, \[\cF_i=\set{H\leq\cG_i}[\exists E\in K_i: \fix(E)\leq H].\]
We have that for $i\leq j$, $\cF_i\subseteq\cF_j$. For every $i\in I$ let $\HS_i$ be $\HS_{\cF_i}\subseteq\fM^{\PP^{\leq i}}$. Let $\cF=\bigcup_{i\in I}\cF_i$, then $\HS=\HS_\cF=\bigcup_{i\in I}\HS_i$.

It is a standard way to define $\fM[G]$ as the union $\bigcup_{i\in I}\fM[G_i]$. For every $i\in I$ we define $\fN_i=(\HS_i)^{G_i}\subseteq\fM[G_i]$ to be a symmetric extension of $\fM$. Then for $i\leq j$ we have $\fN_i\subseteq\fN_j$, and every $\fN_i$ has the same ordinals (and initial ordinals) and satisfies $\ZF+\DC_{<\kappa}$.

Let $\fN=\bigcup_{i\in I}\fN_i$. We first observe the following equality holds, \[\fN=\bigcup_{i\in I}\fN_i=\bigcup_{i\in I}(\HS_i)^{G_i}=\bigcup_{i\in I}(\HS_i)^G=\left(\bigcup_{i\in I}\HS_i\right)^G=\HS^G.\]
Then $\fN$ is a model of $\ZF+\DC_{<\kappa}$. The model satisfies $\DC_{<\kappa}$ because every $\fN_i$ does. We shall see that $r_{i,\alpha}, R_i$ and $D_Q$ (for $Q\in\fN$) are in $\fN$, and that the class $\{D_Q\mid Q\in\power(I)\cap\fN\}$ is a class of $\fN$. Note that the equality above tells us that $x\in\fN$ if and only if there exists $\dot x\in\HS$ such that $\dot x^G=x$, and therefore there exists  $E\in[I\times \Ord\times\Ord]^{<\kappa}\cap\fM$ such that $\fix(E)\leq\sym(\dot x)$. As before we say that $E$ is a support of $\dot x$.

\begin{prop}
For all $i\in I,\alpha<i$ we have $r_{i,\alpha}\in\fN$, $R_i\in\fN$. Furthermore $i\mapsto R_i$ is definable in $\fM[G]$ and its initial segments are in $\fN$, and therefore whenever $Q\subseteq I$ is a set in $\fN$ then $D_Q$ is in $\fN$.
\end{prop}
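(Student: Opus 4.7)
My plan is to follow the pattern of \autoref{p:canonical-symmetric} and the Fact on $D_Q$ from Section~3, pushed through the stratification $\HS=\bigcup_{i\in I}\HS_i$ recorded just before the proposition. For the first two inclusions, I would fix $i\in I$ and $\alpha<i$ and compute $\pi\dot r_{i,\alpha}$ for an arbitrary $\pi\in\cG_i$. Clauses (1)--(2) in the definition of $\cG$ force $\pi$ to preserve the first and third coordinates and only permute the second; writing $\pi(i,\alpha,\gamma)=(i,\sigma(\alpha),\gamma)$, the identity $\pi p(\pi(i,\alpha,\gamma))=p(i,\alpha,\gamma)$ gives
\[\pi\dot r_{i,\alpha}=\set{(q,\check\gamma)}[q(i,\sigma(\alpha),\gamma)=1\land q\in\PP^{\leq i}]=\dot r_{i,\sigma(\alpha)}.\]
Therefore $\fix(\set{(i,\alpha,0)})\leq\sym(\dot r_{i,\alpha})$, and since $\set{(i,\alpha,0)}\in K_i$ I conclude $\dot r_{i,\alpha}\in\HS_i$. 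The same action merely permutes the enumerants of $\dot R_i=\set{\dot r_{i,\alpha}}[\alpha<i]^\bullet$, so $\pi\dot R_i=\dot R_i$ for every $\pi\in\cG_i$; thus $\varnothing\in K_i$ supports $\dot R_i$ and $\dot R_i\in\HS_i\subseteq\HS$. Interpreting by $G$ yields $r_{i,\alpha},R_i\in\fN_i\subseteq\fN$.

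Next, definability of $i\mapsto R_i$ in $\fM[G]$ is immediate from the uniform definitions of $r_{i,\alpha}$ and $R_i$ out of $g=\bigcup G$ and the class parameter $I\in\fM$, so only the initial-segment claim needs attention. For fixed $i\in I$, I would consider
\[\dot F_i=\set{(\check j,\dot R_j)^\bullet}[j\in I,\ j\leq i]^\bullet\in\fM^{\PP^{\leq i}}.\]
The previous paragraph shows $\pi\dot R_j=\dot R_j$ for every $\pi\in\cG_i$ and $j\leq i$, and $\pi\check j=\check j$ is automatic, so $\pi\dot F_i=\dot F_i$; hence $\varnothing$ supports $\dot F_i$, placing $\dot F_i\in\HS_i$ and $F_i=\dot F_i^G=\set{(j,R_j)}[j\in I,\ j\leq i]$ in $\fN_i\subseteq\fN$.

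Finally, given $Q\in\fN$ with $Q\subseteq I$, I would exploit that $I\subseteq\Ord^\fM$ to regard $Q$ as a set of ordinals bounded above by some $\xi\in\Ord$; since $\fN=\bigcup_{j\in I}\fN_j$ is an increasing union and $I$ is a proper class, I can choose $i\in I$ with $i>\xi$ and $Q\in\fN_i$. Then $D_Q=\bigcup_{j\in Q}F_i(j)$ is definable inside $\fN_i\models\ZF$ from $Q$ and $F_i$, giving $D_Q\in\fN_i\subseteq\fN$. The only real obstacle in all of this is bookkeeping: each symmetry calculation must be carried out inside one of the set-sized pieces $(\PP^{\leq i},\cG_i,\cF_i,\HS_i)$ so that the set-forcing arguments of Section~3 apply, and the decomposition $\HS=\bigcup_{i\in I}\HS_i$ makes this localization automatic.
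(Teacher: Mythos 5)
Your proposal is correct and takes essentially the same route as the paper: the identical support computations ($\set{(i,\alpha,0)}$ for $\dot r_{i,\alpha}$, $\varnothing$ for $\dot R_i$ and for the initial-segment names $\dot F_j=\set{(\check i,\dot R_i)^\bullet}[i\leq j]^\bullet$), the same localization of all symmetry arguments to the set-sized pieces $(\PP^{\leq i},\cG_i,\cF_i,\HS_i)$ with $\HS=\bigcup_{i\in I}\HS_i$, and the same final step of absorbing a given $Q\in\fN$ into some $\fN_i$ with $i$ above $\sup Q$ so that $D_Q=\bigcup_{j\in Q}F_i(j)$ is computed inside $\fN_i\models\ZF$. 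The only cosmetic deviations are that you place $\dot r_{i,\alpha}$ in $\HS_i$ where the paper uses $\HS_j$ for $j>i$ (both are fine, since $\set{(i,\alpha,0)}\in K_i$), and that your $\sigma$ should be indexed by $i$ and read, as the paper itself tacitly does, as acting uniformly in $\gamma$ so that columns map to columns.
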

\begin{proof}
It is immediate that $\set{(i,\alpha,0)}$ is a support of $\dot r_{i,\alpha}$ and that $\dot r_{i,\alpha}\in\HS_j$ for any $j>i$. It therefore follows that $\varnothing$ is a support of $\dot R_i$ which is also in $\HS_j$. From that it follows that $r_{i,\alpha}$ and $R_i$ are both in $\fN$ for any $i$ and $\alpha$.

Consider the class name $\dot F=\set{(\check i,\dot R_i)^\bullet}[i\in I]^\bullet$. For all $j\in I$ we have that the name $\dot F_j=\set{(\check i,\dot R_i)^\bullet}[i\leq j]^\bullet$ is a symmetric name in $\HS_j$, and $\sym_{\cG_j}(\dot F_j)=\cG_j$. Therefore $\dot F=\bigcup_{i\in I}\dot F_i$ is a symmetric class in $\HS$, and its interpretation $F=\dot F^G$ is a class of $\fM[G]$, whose initial segments are in $\fN$. From this follows that whenever $Q\subseteq I$ is a set in $\fN$ then it appears in some $\HS_j$ and therefore \[D_Q=\bigcup_{i\in Q}F(i)=\set{x}[\exists i\in Q:x\in F(i)],\] is in $\fN$ as promised.
\end{proof}

We observe that as in \autoref{Fact:Onto} every $R_i$ can be mapped onto $i$ with the map $r_{i,\alpha}\mapsto\min r_{i,\alpha}$. Obviously there is no $R_i$ that can be mapped onto $i^+$. Furthermore the proof that $\kappa\nleq|R_i|$ is the same as the proof of \autoref{Prop:ACfails}.

\begin{prop}\label{Prop:ComparabilityClass}
For every $Q,T$ subsets of $I$, $\fN\models Q\subseteq T\leftrightarrow|D_Q|\leq^\ast|D_T|$.
\end{prop}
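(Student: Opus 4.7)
The plan is to run the proof of \autoref{Thm:IncomparabilityOfChunks} at a fixed level $i\in Q\setminus T$, using that $\cG$ and $\cF$ behave, on each individual level, exactly like the group and filter from Section~3. The forward direction is immediate: if $Q=\varnothing$ then $D_Q=\varnothing$ and $|D_Q|\leq^\ast|D_T|$ holds by convention; otherwise $D_Q\subseteq D_T$ and, fixing any $r_0\in D_Q$, the map sending $x\in D_T$ to $x$ when $x\in D_Q$ and to $r_0$ otherwise is a surjection onto $D_Q$ definable in $\fN$ from the parameters $D_Q,D_T,r_0$.

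For the main direction I would suppose for contradiction that in $\fN$ one has $Q\nsubseteq T$ yet there exists a surjection $f\colon D_T\twoheadrightarrow D_Q$. Fix $\HS$-names $\dot f,\dot Q,\dot T,\dot D_Q,\dot D_T$ and a condition $p\in G$ forcing both the surjection clause and $\dot Q\nsubseteq\dot T$, and take a common support $E\in[I\times\Ord\times\Ord]^{<\kappa}\cap\fM$ (the union of the individual supports, still of size $<\kappa$ by $\kappa$-completeness of $\cF$). Strengthen $p$ to decide some $i\in I$ with $\check i\in\dot Q\setminus\dot T$. Since $|E|<\kappa\leq i$, the set of $\alpha<i$ for which some $(i,\alpha,\gamma)\in E$ has cardinality $<\kappa$, so a good $\alpha<i$ can be chosen; then $r_{i,\alpha}\in D_Q$, and pulling this element back through the forced surjection, a further extension $q$ yields $j\in I$ and $\delta<j$ with $q\Vdash\check j\in\dot T\wedge\dot f(\dot r_{j,\delta})=\dot r_{i,\alpha}$, and necessarily $j\neq i$ because $q\Vdash\check i\notin\dot T$.

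The crux is then to pick $\beta<i$, $\beta\neq\alpha$, with no $(i,\beta,\gamma)\in E$ and no $(i,\beta,\gamma)\in\dom q$. Using regularity of $i\in I$ and $\min I\geq\kappa$, the number of excluded values is at most $|E|+|q(i)|<\kappa+i=i$, so such $\beta$ exists. The involution $\pi$ swapping $(i,\alpha,\gamma)\leftrightarrow(i,\beta,\gamma)$ for every $\gamma<i$ and acting as the identity elsewhere belongs to $\fix(E)\subseteq\cG$, hence fixes each of $\dot f,\dot Q,\dot T,\dot D_Q,\dot D_T$, and also fixes $\dot r_{j,\delta}$ since $j\neq i$. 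By the Symmetry Lemma $\pi q\Vdash\dot f(\dot r_{j,\delta})=\dot r_{i,\beta}$, while the case analysis on $\dom q\cap\dom\pi q$ transcribed verbatim from \autoref{Thm:IncomparabilityOfChunks} shows that $q$ and $\pi q$ are compatible; a common extension, strengthened if necessary to separate the generic sets $r_{i,\alpha}$ and $r_{i,\beta}$, then forces $\dot f$ to take two distinct values at $\dot r_{j,\delta}$, contradicting that $\dot f$ names a function. The only substantive difference from the set case is the quantitative bookkeeping that produces $\alpha$ and $\beta$ off $E$ and off $\dom q$, and this is precisely where the regularity of $i$ and the assumption $\min I\geq\kappa$ enter the proof; all other steps are a faithful transcription of the set-case argument.
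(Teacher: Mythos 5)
Your proof is correct and is essentially the paper's own argument: the paper disposes of this proposition by remarking that the proof of \autoref{Thm:IncomparabilityOfChunks} ``applies here completely by noting that there is some $i\in I$ such that the entire proof is actually carried in $\PP^{\leq i}$'', and your write-up is exactly that transcription, with the extra bookkeeping (choosing $\alpha,\beta<i$ off $E$ and off $\dom q$ using $|E|<\kappa\leq i$ and $|\dom q(i)|<i$, and noting $j\neq i$) made explicit, matching how the paper itself handles the analogous steps in \autoref{Prop:ACfails} and in the $\WISC$ theorem. One trivial slip: the common support $E$ has size $<\kappa$ simply because it is a finite union of sets of size $<\kappa$, not by ``$\kappa$-completeness of $\cF$'', which concerns intersections of groups; this does not affect the argument.
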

\begin{proof}
If $Q\subseteq T$ then $D_T\subseteq D_Q$ and the result is trivial. Suppose that $\dot Q$ and $\dot T$ are names in $\HS$ for the sets $Q,T$ respectively. Assume towards contradiction that $p\forces\dot Q\nsubseteq\dot T\land\expr{\dot f\colon\dot D_T\to\dot D_Q\text{ is a surjection}}$ where $\dot f,\dot D_Q,\dot D_Q\in\HS$ and $\dot D_Q,\dot D_T$ are names for $D_Q$ and $D_T$ respectively.

The proof of \autoref{Thm:IncomparabilityOfChunks} applies here completely by noting that there is some $i\in I$ such that the entire proof is actually carried in $\PP^{\leq i}$.
\end{proof}

This concludes the proof of \autoref{Thm:ClassThm}, as taking $S_i=\set{j\preceq i}$ guarantees that $|S_i|\leq^\ast|S_j|\leftrightarrow|S_i|\leq|S_j|\leftrightarrow i\preceq j$.

\section{Extensions of the Theorem}

We draw two corollaries from the theorem and show the independence of two choice principles from $\DC_\kappa$ (for any $\kappa$). When a choice principle is not provable by $\DC_\kappa$, for any $\kappa$, it hints us that it may be equivalent to the axiom of choice, or that it is ``orthogonal'' to $\DC_\kappa$-like principles. 

\begin{theorem}\label{Thm:Antichains}
For every cardinal $\mu$ it is consistent with $\ZF+\DC_\mu$ that for every set of cardinals there is a cardinal incomparable to all of them, and for every ordinal $\alpha$ there is a decreasing sequence of cardinals of order type $\alpha^\ast$.
\end{theorem}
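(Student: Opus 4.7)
The plan is to apply \autoref{Thm:ClassThm} with $\kappa=\mu^+$ to a partially ordered class $(I,\preceq)$ engineered so that both phenomena are visible in the embedding. Take $I$ to consist of pairs $(\alpha,\beta)$ with $\alpha$ a regular cardinal strictly above $\mu$ and $\beta<\alpha$, ordered by $(\alpha_1,\beta_1)\preceq(\alpha_2,\beta_2)$ iff $\alpha_1=\alpha_2$ and $\beta_2\leq\beta_1$; identify $I$ with a class of regular cardinals in the usual way. Then $(I,\preceq)$ is a disjoint union of pairwise $\preceq$-incomparable ``columns,'' where the column at level $\alpha$ is order-isomorphic to $\alpha^\ast$. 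Each initial segment $\set{j}[j\preceq(\alpha,\beta)]$ has size $\leq\alpha$ and lies in $\fM$, so the hypothesis of \autoref{Thm:ClassThm} is met; it yields $\fN\models\ZF+\DC_{<\mu^+}$ (in particular $\DC_\mu$) with an embedding $i\mapsto\lvert S_i\rvert$ of $(I,\preceq)$ into the cardinals of $\fN$. The decreasing sequence of order type $\alpha^\ast$ then comes for free: given an ordinal $\alpha$, pick a regular $\alpha^*>\max(\alpha,\mu)$; the sequence $\tup{(\alpha^*,\delta)}[\delta<\alpha]$ is strictly $\preceq$-descending in $I$ and so maps to a strictly decreasing sequence of cardinals of length $\alpha$ in $\fN$ (strictness because $i\prec j$ implies $\lvert S_i\rvert<\lvert S_j\rvert$), with range of order type $\alpha^\ast$ in the cardinal ordering.

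For the incomparable cardinal, let $C\in\fN$ be a set of cardinals and pick a name $\dot C\in\HS$. Then $\dot C\in\HS_k$ for some $k\in I$, and by heredity every name nested inside $\dot C$ -- including names for members of $C$ and for their representatives -- also lies in $\HS_k$, hence has support in $(I\cap k^+)\times k\times k$. Pick $i^*=(\alpha^*,0)\in I$ with $\alpha^*$ regular and above $k$; such $i^*$ exists because $I$ is a proper class. I claim $\lvert S_{i^*}\rvert$ is incomparable to every $\mathfrak c\in C$. If some condition forces a name $\dot f\in\HS$ to be an injection from $\dot S_{i^*}$ into a representative of $\mathfrak c$ (or, for the $\leq^\ast$ direction, a surjection in the opposite direction), then a support $E'$ of $\dot f$ has size $<\mu^+\leq\alpha^*$, so many row-indices $\delta$ within column $\alpha^*$ are free of $E'$. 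The proof of \autoref{Prop:ACfails} now applies verbatim: extend to a condition deciding $\dot f$ at some $\dot r_{(\alpha^*,\gamma),\delta}$ with $\delta$ free, swap $\delta$ with a second free $\delta'$ by a permutation $\pi\in\fix(E')$ acting only within column $\alpha^*$. Since $\pi$ touches nothing at or below level $k$, it fixes both $\dot f$ and the name of the target, and the usual compatibility check yields a condition forcing $\dot f$ to take two distinct values at one point.

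The main obstacle is this last step -- upgrading \autoref{Thm:IncomparabilityOfChunks}, which handles comparisons only among cardinals of the form $\lvert D_Q\rvert$, to incomparability with an arbitrary cardinal in $\fN$. The layered structure $\HS=\bigcup_{k\in I}\HS_k$ is what makes the argument go through: a single level $k$ bounding $\dot C$ simultaneously bounds the supports of every name hereditarily inside $\dot C$, so one fresh column above $k$ produces a cardinal incomparable to all of $C$ at once.
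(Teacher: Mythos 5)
Your construction is the paper's own model in a light disguise: once you code your columns as regular cardinals and apply \autoref{Thm:ClassThm}, the sets $D_{S_{(\alpha^*,\delta)}}$ attached to the initial segments of a column are exactly the tail unions $D_{\beta^\ast}=\bigcup\{R_\gamma \mid \beta\leq\gamma<\alpha\}$ that the paper uses, so the decreasing-chain half of your argument is correct and essentially identical to the paper's (which embeds the \emph{discrete} class order and takes tails by hand, invoking \autoref{Prop:ComparabilityClass} for strictness). Your layered-support mechanism $\HS=\bigcup_k\HS_k$, with a fresh column above the level of $\dot C$, is also the right device --- it is precisely what the paper deploys in its $\lnot\WISC$ theorem.

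The gap is in the incomparability claim. Incomparability is two-sided, and you only rule out one side: an injection from $D_{S_{i^*}}$ into a representative $A$ of $\mathfrak c$, and a surjection from $A$ onto $D_{S_{i^*}}$, i.e.\ $|S_{i^*}|\nleq\mathfrak c$ and $|S_{i^*}|\nleq^\ast\mathfrak c$. You never rule out $\mathfrak c\leq|S_{i^*}|$, and that direction is irreparably false for arbitrary $C$: since $\fN\models\DC_{<\kappa}$, every infinite set is Dedekind-infinite, so $\aleph_0\leq|S_{i^*}|$, and in fact every aleph $\lambda<\kappa$ injects into each $R_j$ (by the $\DC_\lambda$ dichotomy, as $R_j$ maps onto $j\geq\kappa$ and hence cannot inject into $\lambda$). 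So with $C=\{\aleph_0\}$ no cardinal of $\fN$ is incomparable with all members of $C$, and your assertion ``$|S_{i^*}|$ is incomparable to every $\mathfrak c\in C$'' is not just unproved but refutable in your own model. Concretely, the swap breaks in that direction because for $\dot f\colon\dot A\to\dot D_{S_{i^*}}$ you would need some $q$ forcing $\dot f(\dot a)=\dot r_{j,\delta}$ with $\delta$ \emph{fresh}; if the range of $f$ is contained in the $r$'s indexed by the support $E$ --- a well-orderable set of size $<\kappa$ in $\fN$ --- no such $q$ exists, which is exactly what happens when $\mathfrak c$ is a small aleph. What your argument genuinely yields is the Roguski-style content the paper actually proves: antichains $\{|R_\alpha| \mid \alpha<\lambda\}$ of every ordinal length (the paper's entire proof of the first clause; your cross-column incomparabilities recover it), and, via the fresh column, a cardinal $\mathfrak d$ with $\mathfrak d\nleq\mathfrak c$ and $\mathfrak d\nleq^\ast\mathfrak c$ for all $\mathfrak c\in C$, where $\mathfrak c\nleq\mathfrak d$ holds additionally whenever $\mathfrak c$ is not below an aleph $<\kappa$ (run the dichotomy: either every deciding condition lands in the $E$-indexed part, making $\mathfrak c$ an aleph $<\kappa$, or a fresh value exists and the swap applies). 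Either weaken your claim to this, or add the missing direction with the dichotomy and the explicit caveat about small alephs.
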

\begin{proof}
Let $\kappa>\mu$ and consider the model from Section~4 in which we embed class $I$ of regular cardinals above $\kappa$ with the discrete order into the cardinals of $\fN$, for better readability we identify the $I$ with its transitive collapse, $\Ord$. We have that $\fN\models\DC_{<\kappa}$ and therefore $\fN\models\DC_\mu$. For any $\alpha\in\Ord$ we have $R_\alpha$ as defined as in the proof of \autoref{Thm:ClassThm}, then $\set{|R_\alpha|}[\alpha<\lambda]$ is an antichain in both $\leq^\ast$ and in $\leq$, for every $\lambda\in\Ord$. 

Let $\alpha$ be an ordinal, and define for $\beta<\alpha$ the set $D_{\beta^\ast}=\bigcup\set{R_\gamma}[\beta\leq\gamma<\alpha]$. By \autoref{Prop:ComparabilityClass} we have that $\set{|D_{\beta^\ast}|}[\beta<\alpha]$ is a decreasing chain of cardinals in both $\leq$ and $\leq^\ast$.
\end{proof}
Note that it is impossible to find a decreasing sequence of cardinals of order type $\Ord^\ast$. Any set whose cardinality is the maximum of such sequence must have a proper class of different subsets, which is a contradiction to the power set axiom.

Finally we will show that the constructive set-theory oriented axiom known as $\WISC$ is independent from $\ZF$. This result was known due to van den Berg (see \cite{vandenBerg:2012})\footnote{Van den Berg names this axiom \textit{the Axiom of Multiple Choice}, an unfortunate name as it is already the name of a relatively known choice principle.}. The proof given by van den Berg assumes some very large cardinals, and we improve it by removing this additional assumption and by showing the compatibility of this failure with $\DC_\mu$ for arbitrary $\mu$.

The principle $\WISC$ (Weakly Initial Set Cover) can be formulated as follows: \textit{For every set $X$ there is a set $Y$, such that whenever $Z$ is a set and $f\colon Z\to X$ is a surjection then there is $q\colon Y\to Z$ such that $f\circ q$ is onto $X$}. This formulation is due to Fran\c{c}ois Dorais (see \cite{Roberts:2013} for more details).

We will now show that the model from \autoref{Thm:Antichains} satisfies $\lnot\WISC$. Recall that for all $\alpha$, $R_\alpha$ can be mapped onto $\kappa$. We will show that for $\kappa$ there is no $Y$ as in the requirement of $\WISC$.

\begin{theorem}
Let $\fN$ be the model from \autoref{Thm:Antichains}, then for every set $Y\in\fN$, there is $\alpha\in\Ord$ such that $R_\alpha$ can be mapped onto $\kappa$ by some function $h$, but every function $f\colon Y\to R_\alpha$ has range of cardinality $<\kappa$. Therefore there is no such $f$ for which $h\circ f$ is onto $\kappa$. In particular $\fN\models\lnot\WISC$. 
\end{theorem}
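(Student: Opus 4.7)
The plan is to exploit the decomposition $\fN=\bigcup_{j\in I}\fN_j$ and pick $\alpha$ large enough that $Y$ is ``forgotten'' by the $\alpha$-coordinate of the forcing. Given $Y\in\fN$, I first fix $j\in I$ with $Y\in\fN_j$ and choose any $\alpha\in I$ with $\alpha>j$; such an $\alpha$ exists because $I$ is a proper class. By the Section~4 analogue of \autoref{Fact:Onto} noted just after \autoref{Prop:ComparabilityClass}, $R_\alpha$ maps onto $\alpha\geq\kappa$, and hence onto $\kappa$. I then aim to show that every $f\colon Y\to R_\alpha$ in $\fN$ satisfies $|f[Y]|<\kappa$, which immediately implies $h\circ f$ cannot be surjective onto $\kappa$.

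To prove the bound, let $\dot f\in\HS$ be a name for $f$ with support $E_f$, and put $B_f=\{\beta<\alpha:(\alpha,\beta,\gamma)\in E_f\text{ for some }\gamma\}$. Then $|B_f|\leq|E_f|<\kappa$. My goal is to show $f[Y]\subseteq\{r_{\alpha,\beta}:\beta\in B_f\}$; this set is well-ordered by $B_f\subseteq\alpha$ in $\fN$ and so has $\fN$-cardinality at most $|B_f|<\kappa$.

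For the inclusion, suppose toward contradiction that some $y\in Y$ has $f(y)=r_{\alpha,\beta}$ with $\beta\notin B_f$. The crucial observation is that because $\dot Y\in\HS_j$ and $\HS_j$ is hereditarily closed, there is $\dot y\in\HS_j$ with $\dot y^G=y$; its support $E_y$ lies in $(I\cap(j+1))\times j\times j$ and does not involve $\alpha$. Now take $q\in G$ forcing $\dot f(\dot y)=\dot r_{\alpha,\beta}$, and pick $\beta'<\alpha$ avoiding $B_f\cup\{\beta\}$ and avoiding every second coordinate that appears in $\dom q$ at level $\alpha$; both forbidden sets have size $<\alpha$, so this is possible. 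The swap $\pi\in\cG$ exchanging $(\alpha,\beta,\gamma)\leftrightarrow(\alpha,\beta',\gamma)$ for all $\gamma<\alpha$ lies in $\fix(E_f)\cap\fix(E_y)$, so it fixes both $\dot f$ and $\dot y$ while sending $\dot r_{\alpha,\beta}$ to $\dot r_{\alpha,\beta'}$. The Symmetry Lemma then yields $\pi q\forces\dot f(\dot y)=\dot r_{\alpha,\beta'}$, and the choice of $\beta'$ makes $q$ and $\pi q$ compatible by exactly the verification in \autoref{Prop:ACfails}. Their common extension forces $\dot r_{\alpha,\beta}=\dot r_{\alpha,\beta'}$, which a denser extension refutes by separating the two generic sets at some coordinate.

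From $|f[Y]|^\fN<\kappa$ it follows that $|h[f[Y]]|<\kappa$, so $h\circ f$ cannot surject onto $\kappa$. Since $Y\in\fN$ was arbitrary, no set witnesses $\WISC$ for $X=\kappa$, and therefore $\fN\models\lnot\WISC$. The main obstacle is the extraction of the name $\dot y$ with support avoiding $\alpha$: merely knowing a support for $\dot Y$ is not sufficient, since names appearing inside $\dot Y$ may in principle use arbitrary cardinals. It is precisely the stratification $\HS=\bigcup_{j\in I}\HS_j$ together with the hereditary closure of each $\HS_j$ that guarantees the desired $\dot y$ exists, and this is why the argument must be phrased in terms of the class-forcing hierarchy rather than a single symmetric extension.
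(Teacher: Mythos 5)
Your proposal is correct and takes essentially the same approach as the paper's proof: choose $\alpha$ strictly above the stratum $\HS_j$ in which $Y$ has a name, bound the range of $f$ by the $<\kappa$ many level-$\alpha$ columns occurring in the support of $\dot f$, and use a two-column swap $\pi$ at level $\alpha$ which fixes $\dot f$ (by support) and $\dot y$ (because its name only involves conditions of $\PP^{\leq j}$, which $\pi$ does not move), so that $q$ and $\pi q$ are compatible and force contradictory values. The only differences are presentational: the paper takes $\dot y$ to be a name appearing inside $\dot Y$ and runs the argument as a forcing dichotomy on $p\forces|\rng\dot f|<\kappa$, whereas you take an arbitrary $\HS_j$-name for $y\in Y$ (using transitivity of $\fN_j$) and exhibit the explicit inclusion $f[Y]\subseteq\set{r_{\alpha,\beta}}[\beta\in B_f]$.
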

\begin{proof}
We will show that in $\fN$ for every $Y$ there is some $\alpha$ such that any $f\colon Y\to R_\alpha$ must satisfy $|\rng f|<\kappa$, and therefore it is impossible that any composition of $f$ with a function from $R_\alpha$ is onto $\kappa$.

Let $Y\in\fN$ be any set, and let $\alpha\in\Ord$ be such that for some $\beta<\alpha$ we have $\dot Y\in\HS_\beta$. This means that any condition which appears in $\dot Y$ appears in $\PP^{\leq\beta}$. Suppose that $p\forces\dot f\colon\dot Y\to\dot R_\alpha$, and $\dot f\in\HS$. If $p\forces|\dot\rng f|<\kappa$ then we are done, assume that this is not the case, and that $p\forces|\dot\rng f|\nless\kappa$.

Let $E\in[\Ord\times\Ord\times\Ord]^{<\kappa}$ be a support for $\dot f,\dot Y$ (recall that $\dot R_\alpha$ is supported by any set). Let $q\leq p$ be such that there is $\delta<\alpha$ such that for all $\gamma<\alpha$, $(\alpha,\delta,\gamma)\notin E$, and for some $\dot y$ we have that $q\forces\dot f(\dot y)=\dot r_{\alpha,\delta}$. We can now find $\tau\neq\delta$ such that $(\alpha,\tau,\gamma)\notin E\cup\dom q$ for any $\gamma<\alpha$. Let $\pi$ be the permutation in $\cG$ defined as follows:  $\pi(\alpha,\delta,\gamma)=(\alpha,\tau,\gamma)$; $\pi(\alpha,\tau,\gamma)=(\alpha,\delta,\gamma)$; and $\pi(x,y,z)=(x,y,z)$ otherwise.

As $\dot y$ is a name appearing in $\dot Y$, and thus $\dot y\in\HS_\beta$, we have that any condition in $\dot y$ appears in $\PP^{\leq\beta}$. This means that for any permutation in $\cG$ which does not move any condition in $\PP^{\leq\beta}$ will not move $\dot y$ either, in particular this is true for $\pi$ defined above.

We have that $\pi q\forces\dot f(\dot y)=\dot r_{\alpha,\tau}$, and as in the proof of \autoref{Thm:IncomparabilityOfChunks} we have that $q$ and $\pi q$ are compatible which is a contradiction, and the conclusion follows as wanted.
\end{proof}

Therefore for every $\mu$, $\WISC$ is unprovable from $\ZF+\DC_\mu$. This extends the results by Rathjen which establish the independence of a slightly stronger choice principle from $\ZF$ by a similar method as van den Berg (see \cite{Rathjen:2006}).

\section{Acknowledgements}

The author wishes to thank Uri Abraham and Matatyahu Rubin for many conversations which helped to shape this paper, and for their help in revising the manuscript. He also thanks Andr\'es Caicedo for introducing him to the problem of antichains of cardinals, and for additional suggestions. And to David M. Roberts for his help with the parts regarding $\WISC$. Final thanks goes to the referee of this paper for his helpful comments and corrections, and to the editor in his invaluable help in preparation of the final version.

\bibliographystyle{amsalpha}
\bibliography{bib}
\end{document}